\def\benm{\begin{enumerate}}
\def\eenm{\end{enumerate}}
\def\bal{\begin{align}}
\def\eal{\end{align}}
\newtheorem{theorem}{Theorem}[section]
\theoremstyle{definition}
\newtheorem{proposition}[theorem]{Proposition}
\newtheorem{corollary}[theorem]{Corollary}
\theoremstyle{remark}
\newtheorem{remark}[theorem]{Remark}
\numberwithin{equation}{section}
\newcommand{\dd}{\mathrm{d}}
\begin{document}

\title[Covariant Functions of Characters of Compact Subgroups]{Covariant Functions of Characters of Compact Subgroups}

\author[A. Ghaani Farashahi]{Arash Ghaani Farashahi}
\address{Department Pure Mathematics, School of Mathematics, University of Leeds, Leeds LS2 9JT, United Kingdom}
\email{a.ghaanifarashahi@leeds.ac.uk}
\email{ghaanifarashahi@outlook.com}

\curraddr{}




\subjclass[2010]{Primary 43A15, 43A20, 43A85.}



\keywords{covariance property, compact subgroup, covariant function, character.}
\thanks{E-mail addresses: a.ghaanifarashahi@leeds.ac.uk (Arash Ghaani Farashahi) }

\begin{abstract}
This paper presents a systematic study for abstract harmonic analysis on classical Banach spaces of covariant functions of characters of compact subgroups. Let $G$ be a locally compact group and $H$ be a compact subgroup of $G$. Suppose that $\xi:H\to\mathbb{T}$ is a continuous character, $1\le p<\infty$ and $L_\xi^p(G,H)$ is the set of all covariant functions of $\xi$ in $L^p(G)$.  It is shown that $L^p_\xi(G,H)$ is isometrically isomorphic to a quotient space of $L^p(G)$. It is also proven that $L^q_\xi(G,H)$ is isometrically isomorphic to the dual space $L^p_\xi(G,H)^*$, where $q$ is the conjugate exponent of $p$.
The paper is concluded by some results for the case that $G$ is compact.  
\end{abstract}

\maketitle

\section{\bf{Introduction}}

The Banach spaces consist of covariant functions on locally compact groups associated to continuous characters (one-dimensional continuous irreducible unitary representations) of closed subgroups appear in variant mathematical areas and their applications including calculus of pseudo-differential operators, number theory (automorphic forms), induced representations, 
homogenuos spaces, complex (hypercomplex) analysis, theoretical aspects of mathematical physics including coherent states and covariant analysis, see \cite{RB, FollP, kan.taylor, kisil.BJMA, kisil.book, kisil.A, kisil1, MackII, MackI}. 

The following paper presents a unified operator theoretic approach to study abstract harmonic analysis of $L^p$-spaces of covariant functions of characters of compact subgroups in locally compact groups. The introduced approach canonically extends classical methods of abstract harmonic analysis and functional analysis on coset spaces (homogenouse spaces) of compact subgroups by assuming the character to be the identity character of the compact subgroup, see \cite{der1983, For.RMJ, AGHF.CJM, Par.Kum}. 

This article contains 4 sections and organized as follows. Section  2  is  devoted  to  fix  notations  and  provides  a  summary
of classical harmonic analysis on locally compact compact groups and covariant functions of characters of closed subgroups.  
Let $G$ be a locally compact group, $H$ be a compact subgroup of $G$, and $1\le p<\infty$. Suppose that $\xi:H\to\mathbb{T}$ is a fixed character of $H$. 
In section 3, we study some of the operator theoretic aspects of covariant functions on $G$ associated to the character $\xi$ of the subgroup $H$.  
Next section investigates harmonic analysis foundations on the Banach space $L^p_\xi(G,H)$, the space of covariant functions of the character $\xi$ in $L^p(G)$. In this direction, we study fundamental properties of classical Banach spaces of covariant functions of characters of compact subgroups. It is shown that $L^p_\xi(G,H)$ is isometrically isomorphic to a quotient space of $L^p(G)$. We then proved that $L^q_\xi(G,H)$ is isometrically isomorphic to the dual space $L^p_\xi(G,H)^*$, where $q$ is the conjugate exponent of $p$. Finally, we conclude the paper by some related results for the case that $G$ is compact. 

\newpage
\section{\bf{Preliminaries and Notations}}

Let $X$ be a locally compact Hausdorff space. Then $\mathcal{C}_c(X)$ denotes the space of all continuous complex valued functions on $X$ with compact support. If $\lambda$ is a positive Radon measure on $X$, for each $1\le p<\infty$ the Banach space of equivalence classes of $\lambda$-measurable complex valued functions $f:X\to\mathbb{C}$ such that
$$\|f\|_{L^p(X,\lambda)}:=\left(\int_X|f(x)|^p\dd\lambda(x)\right)^{1/p}<\infty,$$
is denoted by $L^p(X,\lambda)$ which contains $\mathcal{C}_c(X)$ as a $\|.\|_{L^p(X,\lambda)}$-dense subspace.

Let $G$ be a locally compact group with the modular function $\Delta_G$ and a fixed left Haar measure $\lambda_{G}$. For a function $f:G\to\mathbb{C}$ and $x\in G$, the functions $L_xf,R_xf:G\to\mathbb{C}$ are given by $L_xf(y):=f(x^{-1}y)$ and $R_xf(y):=f(yx)$ for $y\in G$.
For $1\le p<\infty$, $L^p(G)$ stands for the Banach space $L^p(G,\lambda_G)$. The convolution for $f,g\in L^1(G)$, is defined via
\begin{equation}\label{f.ast.g}
f\ast_G g(x):=\int_Gf(y)g(y^{-1}x)\dd\lambda_G(y)\quad (x\in G).
\end{equation}
We then have $f\ast_G g\in L^p(G)$ with 
$\|f\ast_G g\|_{L^p(G)}\le \|f\|_{L^1(G)}\|g\|_{L^p(G)}$, if $f\in L^1(G)$ and $g\in L^p(G)$.
It is well known as a classical result in abstract harmonic analysis that the Banach function space $L^1(G,\lambda_G)$ is a Banach $*$-algebra with respect to the bilinear product $\ast_G:L^1(G)\times L^1(G)\to L^1(G)$ given by $(f,g)\mapsto f\ast_G g$, with $f\ast_G g$ defined by (\ref{f.ast.g}) and involution given by $f\mapsto f^{\ast_G}$ where $f^{*^G}(x):=\Delta_G(x^{-1})\overline{f(x^{-1})}$ for $x\in G$. Also, for each $p>1$, the Banach function space $L^p(G)$ is a Banach left $L^1(G)$-module equipped with  
the left module action $\ast_G:L^1(G)\times L^p(G)\to L^p(G)$ 
given by $(f,g)\mapsto f\ast_G g$, with $f\ast g$ is defined by (\ref{f.ast.g}), see \cite{der.main, FollH, HR1, kan.lau, 50} and the classical list of references therein. 

Suppose that $H$ is a closed subgroup of $G$. A character $\xi$ of $H$, is a continuous group homomorphism $\xi:H\to\mathbb{T}$, where $\mathbb{T}:=\{z\in\mathbb{C}:|z|=1\}$ is the circle group. In terms of group representation theory, each character of $H$ is a 1-dimensional irreducible continuous unitary representation of $H$. We then denote the set of all characters of $H$ by $\chi(H)$.

Let $G$ be a locally compact group, $H$ be a closed subgroup of $G$, and $\xi\in\chi(H)$. A function $\psi:G\to\mathbb{C}$ satisfies covariant property associated to the character $\xi$, if 
\begin{equation}\label{cov.prop.main}
\psi(xs)=\xi(s)\psi(x),
\end{equation}
for every $x\in G$ and $s\in H$.
The covariant functions appear in abstract harmonic analysis in the construction of induced representations, see \cite{FollH, kan.taylor}. We here employ some of the classical tools in this direction. Suppose that $\lambda_H$ is a left Haar measure on $H$.
For each character $\xi\in\chi(H)$ and a function 
$f\in\mathcal{C}_c(G)$, define the function $T_\xi(f):G\to\mathbb{C}$ via 
\[
T_\xi(f)(x):=\int_Hf(xs)\overline{\xi(s)}\dd\lambda_H(s),
\]
for every $x\in G$. 
Suppose $M_\xi(G,H)$ is the linear subspace of $\mathcal{C}(G)$ given by 
\[
M_\xi(G,H):=\{\psi\in\mathcal{C}_c(G|H):\psi(xh)=\xi(h)\psi(x),\ {\rm for\ all}\ x\in G,\ h\in H\},
\]
where 
\[
\mathcal{C}_c(G|H):=\{\psi\in\mathcal{C}(G):\mathrm{q}({\rm supp}(\psi))\ {\rm is\ compact\ in}\ G/H\},
\]
and $\mathrm{q}:G\to G/H$ is the canonical map given by $\mathrm{q}(x):=xH$ for $x\in G$. Using continuity of the canonical map $\mathrm{q}:G\to G/H$, one can deduce that $\mathcal{C}_c(G)\subseteq\mathcal{C}_c(G|H)$. It is shown that the linear operator $T_\xi$ maps $\mathcal{C}_c(G)$ onto $M_\xi(G,H)$, see Proposition 6.1 of \cite{FollH}.

\section{\bf Covariant Functions of Characters of Compact  Subgroups}
In this section, we shall study some of the fundamental theoretical aspects of covariant functions of characters of compact subgroup in locally compact groups. Throughout, let $G$ be a locally compact group, $H$ be a compact subgroup of $G$, and $\xi\in\chi(H)$ be a fixed character. 

\begin{proposition}\label{Jxi.Ncompact.Proj}
{\it Let $G$ be a locally compact group and $H$ be a compact subgroup of $G$. 
Suppose that $\xi\in\chi(H)$ is a character and $\lambda_H$ is the probability Haar measure of $H$. Then, 
\begin{enumerate}
\item $M_\xi(G,H)\subseteq\mathcal{C}_c(G)$.
\item $T_\xi\circ T_\xi=T_\xi$ on $\mathcal{C}_c(G)$.
\end{enumerate}
}\end{proposition}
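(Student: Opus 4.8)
The plan is to treat the two assertions separately, with essentially all of the work concentrated in (1); part (2) is then a short computation.

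For (1), I would first record what the covariance relation says about the support. Writing $N_\psi:=\{x\in G:\psi(x)\neq 0\}$, the identity $\psi(xh)=\xi(h)\psi(x)$ together with $|\xi(h)|=1$ gives $|\psi(xh)|=|\psi(x)|$, so $N_\psi$ is right $H$-invariant, i.e. $N_\psi H=N_\psi$. Since each right translation $x\mapsto xh$ is a homeomorphism of $G$, passing to closures yields $\supp(\psi)\,H=\supp(\psi)$, and hence $\supp(\psi)=\mathrm{q}^{-1}(\mathrm{q}(\supp(\psi)))$. By the definition of $\mathcal{C}_c(G|H)$ the set $K:=\mathrm{q}(\supp(\psi))$ is compact in $G/H$, so the task reduces to showing that $\mathrm{q}^{-1}(K)$ is compact in $G$; granting this, $\supp(\psi)$ is a closed set equal to the compact set $\mathrm{q}^{-1}(K)$, so $\psi\in\mathcal{C}_c(G)$.

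Establishing that $\mathrm{q}$ is proper is the main obstacle, and it is precisely here that compactness of $H$ is used. I would argue as follows: for each point of $K$ choose a preimage in $G$ and an open neighborhood $U$ of it with compact closure $\overline{U}$; since $\mathrm{q}$ is an open continuous surjection, the sets $\mathrm{q}(U)$ form an open cover of $K$, from which compactness extracts a finite subcover arising from neighborhoods $U_1,\dots,U_n$ with each $\overline{U_i}$ compact. Then $\mathrm{q}^{-1}(K)\subseteq\bigcup_{i=1}^n\overline{U_i}\,H$, because any $x$ with $\mathrm{q}(x)\in K$ satisfies $xH=uH$ for some $u\in U_i$, whence $x\in U_iH$. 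As $H$ is compact, each $\overline{U_i}\,H$ is the image of the compact set $\overline{U_i}\times H$ under the continuous multiplication map, hence compact, so $\mathrm{q}^{-1}(K)$ is a closed subset of a compact set. This is exactly the step that breaks down for non-compact $H$.

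For (2), the key is that $T_\xi(f)$ already lies in $M_\xi(G,H)$. By Proposition 6.1 of \cite{FollH}, quoted above, $T_\xi(\mathcal{C}_c(G))=M_\xi(G,H)$, and by part (1) this is contained in $\mathcal{C}_c(G)$, so $g:=T_\xi(f)$ is again an admissible input for $T_\xi$ and obeys $g(xs)=\xi(s)g(x)$. Substituting this into the defining integral,
\[
T_\xi(g)(x)=\int_H g(xs)\overline{\xi(s)}\,\dd\lambda_H(s)=g(x)\int_H \xi(s)\overline{\xi(s)}\,\dd\lambda_H(s)=g(x)\int_H|\xi(s)|^2\,\dd\lambda_H(s),
\]
and since $|\xi(s)|=1$ and $\lambda_H$ is the probability Haar measure, the last integral equals $\lambda_H(H)=1$. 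Thus $T_\xi(T_\xi(f))=T_\xi(f)$ for every $f\in\mathcal{C}_c(G)$, which is (2). If one prefers not to invoke the covariance of $T_\xi(f)$ as a black box, it follows directly from the substitution $s\mapsto hs$, left-invariance of $\lambda_H$, and the identity $\overline{\xi(h^{-1})}=\xi(h)$; the normalization $\lambda_H(H)=1$ is the only analytic input that is genuinely essential here.
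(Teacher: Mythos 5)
Your proof is correct and follows essentially the same route as the paper: part (1) reduces to showing that $\supp(\psi)$ is contained in a set of the form $(\text{compact})\cdot H$, hence compact, and part (2) is the identical computation using the covariance of $T_\xi(f)$ together with $|\xi|=1$ and $\lambda_H(H)=1$. The only difference is that where the paper cites Folland's Lemma 2.46 to produce a compact $F\subseteq G$ with $\supp(\psi)\subseteq FH$, you re-prove that lifting fact by hand via an open-cover/properness argument (and you additionally invoke the covariance relation to make $\supp(\psi)$ exactly $H$-saturated, which is harmless but not needed, since $\supp(\psi)\subseteq\mathrm{q}^{-1}(\mathrm{q}(\supp(\psi)))$ holds for any function).
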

\begin{proof}
(1)  Let $H$ be a compact subgroup of $G$. We then have $\mathcal{C}_c(G)=\mathcal{C}_c(G|H)$. To see this, let $\psi\in\mathcal{C}_c(G|H)$ be given. Then $\mathrm{q}({\rm supp}(\psi))$ is compact in $G/H$. Using Lemma 2.46 of \cite{FollH}, there exists a compact subset $F$ of $G$ such that $\mathrm{q}(F)=\mathrm{q}({\rm supp}(\psi))$. This implies that $\mathrm{supp}(\psi)\subseteq FH$. Since $H$ is compact, we conclude that $\mathrm{supp}(\psi)$ is compact in $G$ and so $\psi\in\mathcal{C}_c(G)$. Therefore, we deduce that $M_\xi(G,H)\subseteq\mathcal{C}_c(G)$.\\
(2) Let $f\in \mathcal{C}_c(G)$ be given. Using (1), we have $T_\xi(f)\in\mathcal{C}_c(G)$. Then, for each $x\in G$ and since $\lambda_H$ is a probability measure, we get
\begin{align*}
T_\xi(T_\xi(f))(x)&=\int_HT_\xi(f)(xs)\overline{\xi(s)}\dd\lambda_H(s)
\\&=\int_HT_\xi(f)(x)\xi(s)\overline{\xi(s)}\dd\lambda_H(s)
\\&=T_\xi(f)(x)\int_H|\xi(s)|^2\dd\lambda_H(s)
=T_\xi(f)(x)\left(\int_H\dd\lambda_H(s)\right)=T_\xi(f)(x).
\end{align*} 
\end{proof}

Invoking Proposition \ref{Jxi.Ncompact.Proj}, if $H$ is compact, one can regard the linear map $T_\xi:\mathcal{C}_c(G)\to\mathcal{C}_c(G)$ as a projection of the linear space $\mathcal{C}_c(G)$ onto the subspace $M_\xi(G,H)\subseteq\mathcal{C}_c(G)$. 

\begin{remark}
If $\xi=1$ is the trivial character of $H$, we then have $T_1(f)=T_H(f)$, see \cite{AGHF.IntJM, AGHF.IJM}.
Then, $M_1(G,H)$ consists of functions on $G$ which are constant on cosets of $N$. Therefore, $M_1(G,H)$ can be canonically identified with $\mathcal{C}_c(G/H)$ via the isometric identification $\psi\mapsto\widetilde{\psi}$, where $\widetilde{\psi}:G/H\to\mathbb{C}$ is given by $\widetilde{\psi}(xH):=\psi(x)$ for every $x\in G$, see Corollary 3.4 of \cite{AGHF.BMMSS}. In this case, harmonic analysis on $M_1(G,H)$ studied from different prespevtives in \cite{For.Sam.Sp, For.RMJ, AGHF.CJM, Par.Kum}.
\end{remark}

We then have the following observations.

\begin{proposition}\label{J.xi.Rk}
{\it Let $G$ be a locally compact group, $H$ be a compact subgroup of $G$, and $\xi\in\chi(H)$ be a character. Let $y\in G$ and $h\in H$. Then, 
\begin{enumerate}
\item $T_\xi\circ R_h=\xi(h)T_\xi$ on $\mathcal{C}_c(G)$.
\item $T_\xi\circ L_y=L_y\circ T_\xi$ on $\mathcal{C}_c(G)$.
\end{enumerate} 
}\end{proposition}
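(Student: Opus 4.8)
The plan is to verify both identities by a direct computation starting from the definition $T_\xi(f)(x)=\int_H f(xs)\overline{\xi(s)}\,\dd\lambda_H(s)$, the only substantive input being the bi-invariance of the Haar measure on the compact group $H$.

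For part (1), I would first unwind the left-hand side: for $x\in G$,
\[
T_\xi(R_hf)(x)=\int_H R_hf(xs)\overline{\xi(s)}\,\dd\lambda_H(s)=\int_H f(xsh)\overline{\xi(s)}\,\dd\lambda_H(s).
\]
The key step is the change of variable $t=sh$ inside this integral. Since $H$ is compact it is unimodular, so $\lambda_H$ is simultaneously left and right invariant; hence replacing $s$ by $th^{-1}$ leaves $\dd\lambda_H$ unchanged and produces $\int_H f(xt)\overline{\xi(th^{-1})}\,\dd\lambda_H(t)$. Then I would use that $\xi$ is a homomorphism into $\mathbb{T}$: from $\xi(th^{-1})=\xi(t)\xi(h)^{-1}$ together with $\xi(h)^{-1}=\overline{\xi(h)}$ (because $|\xi(h)|=1$) one obtains $\overline{\xi(th^{-1})}=\xi(h)\overline{\xi(t)}$, so the scalar $\xi(h)$ pulls out of the integral and what remains is exactly $T_\xi(f)(x)$. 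This establishes $T_\xi\circ R_h=\xi(h)T_\xi$.

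Part (2) is more elementary, since $L_y$ acts on the variable $x$ from the left while $T_\xi$ integrates against a right translation by $H$, and these operations commute automatically. Concretely I would write
\[
T_\xi(L_yf)(x)=\int_H f(y^{-1}xs)\overline{\xi(s)}\,\dd\lambda_H(s)=T_\xi(f)(y^{-1}x)=L_y(T_\xi(f))(x),
\]
with no change of variable needed at all.

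The only point demanding genuine care is the invariance argument in part (1): one must explicitly invoke that compactness of $H$ forces $\lambda_H$ to be right invariant, since for a general, possibly non-unimodular, subgroup the substitution $s\mapsto sh$ would introduce a modular factor $\Delta_H(h)$ and destroy the clean identity. Everything else is routine bookkeeping with the multiplicative and unitary nature of $\xi$, together with the fact recorded in Proposition~\ref{Jxi.Ncompact.Proj} that $T_\xi$ maps $\mathcal{C}_c(G)$ into itself, so that all the expressions above are well-defined elements of $\mathcal{C}_c(G)$ on which $L_y$ and $R_h$ may be applied.
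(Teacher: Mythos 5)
Your proof is correct and follows essentially the same route as the paper: the substitution $s\mapsto sh$ justified by right invariance of $\lambda_H$ (which holds because the compact group $H$ is unimodular), the identity $\overline{\xi(sh^{-1})}=\xi(h)\overline{\xi(s)}$ for part (1), and the direct commutation of left translation with the integral over $H$ for part (2). Your remark making the unimodularity of $H$ explicit is a welcome clarification of a step the paper performs silently.
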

\begin{proof}
(1) Suppose $f\in\mathcal{C}_c(G)$ and $h\in H$. Let $x\in G$ be given. We then have  
\begin{align*}
T_\xi(R_hf)(x)&=\int_HR_hf(xs)\overline{\xi(s)}\dd\lambda_H(s)=\int_Hf(xsh)\overline{\xi(s)}\dd\lambda_H(s)
\\&=\int_Hf(xs)\overline{\xi(sh^{-1})}\dd\lambda_H(sh^{-1})
=\xi(h)\int_Hf(xs)\overline{\xi(s)}\dd\lambda_H(s)=\xi(h)T_\xi(f)(x),
\end{align*}
which implies that $T_\xi(R_hf)=\xi(h)T_\xi(f)$.\\
(2) Suppose $f\in\mathcal{C}_c(G)$ and $y\in G$. Let $x\in G$ be given. We then have
\begin{align*}
T_\xi(L_yf)(x)=\int_HL_yf(xs)\overline{\xi(s)}\dd\lambda_H(s)=\int_Hf(y^{-1}xs)\overline{\xi(s)}\dd\lambda_H(s)=L_y(T_\xi(f))(x),
\end{align*}
implying that $T_\xi(L_yf)=L_y(T_\xi(f))$.
\end{proof}

For functions $f,g\in\mathcal{C}_c(G)$, the function $f\overline{g}$ is  continuous with compact support. Therefore, it is integrable over $G$ with respect to every left Haar measure $\lambda_G$ on $G$. We denote the later integral by $\langle f,g\rangle$. 

\begin{theorem}
{\it Let $G$ be a locally compact group and $H$ be a compact subgroup of $G$. 
Suppose $\xi\in\chi(H)$ is a character and $f,g\in\mathcal{C}_c(G)$. We then have 
\begin{equation}\label{Jxi.Jxi*}
\langle T_\xi(f),g\rangle=\langle f,T_\xi(g)\rangle.
\end{equation}
}\end{theorem}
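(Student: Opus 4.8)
The plan is to establish the identity by unwinding both inner products according to the definition of $T_\xi$ and comparing the resulting double integrals. Since $f,g\in\mathcal{C}_c(G)$, Proposition \ref{Jxi.Ncompact.Proj} guarantees that $T_\xi(f),T_\xi(g)\in\mathcal{C}_c(G)$, so both $\langle T_\xi(f),g\rangle$ and $\langle f,T_\xi(g)\rangle$ are integrals of compactly supported continuous functions, and Fubini's theorem applies freely to the product measure $\lambda_G\times\lambda_H$. Expanding the left-hand side gives
\[
\langle T_\xi(f),g\rangle=\int_H\int_G f(xs)\overline{\xi(s)}\,\overline{g(x)}\,\dd\lambda_G(x)\,\dd\lambda_H(s),
\]
while expanding the right-hand side and pushing the complex conjugate through the inner integral yields
\[
\langle f,T_\xi(g)\rangle=\int_H\int_G f(x)\overline{g(xs)}\,\xi(s)\,\dd\lambda_G(x)\,\dd\lambda_H(s).
\]
It therefore suffices to match these two iterated integrals.

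First I would transform the inner $G$-integral on the left by the change of variable corresponding to right translation by $s$, which converts $f(xs)\overline{g(x)}$ into $f(x)\overline{g(xs^{-1})}$. The crucial point is that this substitution introduces a factor $\Delta_G(s)^{\pm1}$, which equals $1$ here: because $H$ is compact, the restriction of the continuous homomorphism $\Delta_G:G\to(\mathbb{R}^+,\times)$ to $H$ has compact image, and the only compact subgroup of $\mathbb{R}^+$ is trivial, so $\Delta_G(s)=1$ for all $s\in H$. Consequently the inner integral is invariant under this right translation, and the left-hand side becomes $\int_H\int_G f(x)\overline{g(xs^{-1})}\,\overline{\xi(s)}\,\dd\lambda_G(x)\,\dd\lambda_H(s)$.

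To finish, I would apply the substitution $s\mapsto s^{-1}$ in the outer $H$-integral. Since $H$ is compact, hence unimodular with an inversion-invariant Haar measure, this leaves $\dd\lambda_H$ unchanged; and since $\xi$ is a character valued in $\mathbb{T}$, one has $\overline{\xi(s^{-1})}=\xi(s)$. These two facts convert the displayed expression precisely into the right-hand side, completing the argument. I expect the main obstacle to be the careful bookkeeping of the modular function: the entire identity hinges on the vanishing of $\Delta_G$ on $H$, and one must track the direction of the right translation and the resulting power of $\Delta_G$ correctly before invoking compactness of $H$ (though, reassuringly, since $\Delta_G(s)=1$ the precise convention for the sign of the exponent is irrelevant). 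The remaining manipulations—Fubini, inversion invariance of $\lambda_H$, and $\overline{\xi(s^{-1})}=\xi(s)$—are routine.
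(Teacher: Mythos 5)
Your proof is correct and follows essentially the same route as the paper's: expand both sides via Fubini, perform the right-translation substitution on $G$ (with the modular factor killed by $\Delta_G|_H=1$ for compact $H$), then the inversion substitution on $H$ together with $\overline{\xi(s^{-1})}=\xi(s)$. The only cosmetic difference is that you justify $\Delta_G|_H=1$ by noting the image of a compact group in $\mathbb{R}^+$ is trivial, while the paper simply cites $\Delta_G|_H=\Delta_H=1$.
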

\begin{proof}
Let $f,g\in\mathcal{C}_c(G)$ be given. We then have 
\begin{align*}
\langle T_\xi(f),g\rangle&=\int_GT_\xi(f)(x)\overline{g(x)}\dd\lambda_G(x)
\\&=\int_G\left(\int_Hf(xs)\overline{\xi(s)}\dd\lambda_H(s)\right)\overline{g(x)}\dd\lambda_G(x)
\\&=\int_H\left(\int_Gf(xs)\overline{g(x)}\dd\lambda_G(x)\right)\overline{\xi(s)}\dd\lambda_H(s)
\\&=\int_H\left(\int_Gf(x)\overline{g(xs^{-1})}\dd\lambda_G(xs^{-1})\right)\overline{\xi(s)}\dd\lambda_H(s)
\\&=\int_H\Delta_G(s^{-1})\left(\int_Gf(x)\overline{g(xs^{-1})}\dd\lambda_G(x)\right)\overline{\xi(s)}\dd\lambda_H(s).
\end{align*}
Since $H$ is compact in $G$, we have $\Delta_G|_H=\Delta_H=1$. Therefore, we get  
\begin{align*}
\langle T_\xi(f),g\rangle
&=\int_H\Delta_H(s^{-1})\left(\int_Gf(x)\overline{g(xs^{-1})}\dd\lambda_G(x)\right)\overline{\xi(s)}\dd\lambda_H(s)
\\&=\int_Gf(x)\left(\int_H\overline{g(xs^{-1})}\overline{\xi(s)}\dd\lambda_H(s)\right)\dd\lambda_G(x)
\\&=\int_Gf(x)\left(\int_H\overline{g(xs)}\xi(s)\dd\lambda_H(s^{-1})\right)\dd\lambda_G(x)
\\&=\int_Gf(x)\left(\int_H\overline{g(xs)}\xi(s)\dd\lambda_H(s)\right)\dd\lambda_G(x)=\langle f,T_\xi(g)\rangle.
\end{align*}
\end{proof}

We then conclude this section by the following convolution property of the linear map $T_\xi$.

\begin{theorem}\label{mult.Cc}
Let $G$ be a locally compact group, $H$ be a compact subgroup of $G$, and $\xi\in\chi(H)$ be a character. Suppose $f,g\in\mathcal{C}_c(G)$ are given. We then have
\[
T_\xi(f\ast_G g)=f\ast_G T_\xi(g).
\]
\end{theorem}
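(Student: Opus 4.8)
The plan is to expand both sides of the claimed identity directly from the definitions of $T_\xi$ and of the convolution $\ast_G$, observe that each side produces the same integrand over $G\times H$ with the two integrations performed in opposite orders, and then invoke Fubini's theorem to interchange them. First I would fix $x\in G$ and unwind the left-hand side, applying the definition of $T_\xi$ and then \eqref{f.ast.g}, noting that $s$ enters only as a right factor of the argument so that $(f\ast_G g)(xs)=\int_G f(y)g(y^{-1}xs)\,\dd\lambda_G(y)$; this gives
\[
T_\xi(f\ast_G g)(x)=\int_H\left(\int_G f(y)\,g(y^{-1}xs)\,\dd\lambda_G(y)\right)\overline{\xi(s)}\,\dd\lambda_H(s).
\]

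Next I would expand the right-hand side by first applying the definition of $\ast_G$ and then that of $T_\xi$ to the inner factor $T_\xi(g)(y^{-1}x)$, obtaining
\[
(f\ast_G T_\xi(g))(x)=\int_G f(y)\left(\int_H g(y^{-1}xs)\,\overline{\xi(s)}\,\dd\lambda_H(s)\right)\dd\lambda_G(y).
\]
At this point the two displays are visibly iterated integrals of the single function $(y,s)\mapsto f(y)\,g(y^{-1}xs)\,\overline{\xi(s)}$ on $G\times H$, differing only in the order of integration, so the identity reduces to the legitimacy of that interchange.

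The only step that genuinely requires justification — and where I would locate the (mild) main obstacle — is the applicability of Fubini's theorem. For each fixed $x$, the integrand is continuous in $(y,s)$; moreover $f\in\mathcal{C}_c(G)$ confines $y$ to the compact set $\mathrm{supp}(f)$, while $s$ ranges over the compact group $H$ and $|\overline{\xi(s)}|=1$. Hence the integrand is bounded and supported in a compact subset of $G\times H$, so it is integrable with respect to the product measure $\lambda_G\times\lambda_H$, and Fubini's theorem permits the interchange. I would emphasize that no modular-function factor intervenes here, precisely because $s$ appears only as a right translate inside the argument of $g$ and is not the variable of the convolution integral; this is in contrast to the earlier adjointness computation, where the substitution on the convolution variable forced $\Delta_G|_H=\Delta_H=1$ into play. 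The two sides therefore coincide for every $x\in G$, which yields $T_\xi(f\ast_G g)=f\ast_G T_\xi(g)$.
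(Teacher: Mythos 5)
Your argument is correct and follows essentially the same route as the paper: expand $T_\xi(f\ast_G g)(x)$ as an iterated integral over $H\times G$, interchange the order of integration, and recognize the result as $f\ast_G T_\xi(g)(x)$. The paper performs the swap without comment, whereas you explicitly justify Fubini via the compact support of the integrand; your remark that no modular factor appears (unlike in the adjointness identity) is accurate.
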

\begin{proof}
Let $G$ be a locally compact group and $H$ be a compact subgroup of $G$.
Suppose that $f,g\in\mathcal{C}_c(G)$ and $x\in G$ are given. We then have  
\begin{align*}
T_\xi(f\ast_G g)(x)
&=\int_H f\ast_G g(xs)\overline{\xi(s)}\dd\lambda_H(s)
\\&=\int_H \left(\int _Gf(y)g(y^{-1}xs)\dd\lambda_G(y)\right)\overline{\xi(s)}\dd\lambda_H(s)
\\&=\int _Gf(y)\left(\int_H g(y^{-1}xs)\overline{\xi(s)}\dd\lambda_H(s)\right)\dd\lambda_G(y)
\\&=\int_Gf(y)T_\xi(g)(y^{-1}x)\dd\lambda_G(y)=f\ast_G T_\xi(g)(x).
\end{align*}
\end{proof}

\section{\bf $L^p$-Spaces of Covariant Functions of Characters of Compact  Subgroups}
In this section, we study $L^p$-spaces of covariant functions of characters of compact subgroups. We then investigate some of the basic properties of these classical Banach spaces of covariant functions of characters of compact  subgroups on locally compact groups. Throughout, suppose that $G$ is a locally compact group, $H$ is a compact subgroup of $G$, and $\xi\in\chi(H)$. Let $\lambda_{G}$ be a left Haar measure on $G$ and $\lambda_H$ be a Haar measure on $H$.

The following theorem proves the boundedness property of the linear map $T_\xi$ in the sense of $L^p(G)$.

\begin{theorem}\label{Js.p.Ncompact}
Let $G$ be a locally compact group and $1\le p<\infty$.
Suppose $H$ is a compact subgroup of $G$ and $\xi\in\chi(H)$.  The linear operator $T_\xi:(\mathcal{C}_c(G),\|.\|_{L^p(G)})\to(M_\xi(G,H),\|.\|_{L^p(G)})$ 
is bounded. In particular, if $\lambda_H$ is the probability Haar measure 
of $H$ then the linear operator  
$T_\xi:(\mathcal{C}_c(G),\|.\|_{L^p(G)})\to(M_\xi(G,H),\|.\|_{L^p(G)})$ 
is a contraction.
\end{theorem}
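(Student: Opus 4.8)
The plan is to estimate $\|T_\xi(f)\|_{L^p(G)}$ directly from the defining integral for $T_\xi$, reducing the whole argument to the invariance properties of the two Haar measures. Since the operator is already known to send $\mathcal{C}_c(G)$ into $M_\xi(G,H)$, only the norm estimate needs to be established. First I would record the pointwise bound
\[
|T_\xi(f)(x)|\le\int_H|f(xs)|\,|\xi(s)|\,\dd\lambda_H(s)=\int_H|f(xs)|\,\dd\lambda_H(s),
\]
valid for every $x\in G$, which uses nothing but $|\xi(s)|=1$.

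Next, because $H$ is compact, $\lambda_H$ is a finite measure, so I would apply H\"older's inequality on $(H,\lambda_H)$ with conjugate exponents $p,q$ to the right-hand side, obtaining
\[
|T_\xi(f)(x)|^p\le\lambda_H(H)^{p/q}\int_H|f(xs)|^p\,\dd\lambda_H(s).
\]
When $\lambda_H$ is the probability measure this is simply Jensen's inequality and the constant $\lambda_H(H)^{p/q}$ equals $1$; for $p=1$ the H\"older step is vacuous. I would then integrate over $G$ against $\lambda_G$ and interchange the order of integration by Tonelli's theorem, which is legitimate since the integrand is non-negative, to reach
\[
\|T_\xi(f)\|_{L^p(G)}^p\le\lambda_H(H)^{p/q}\int_H\left(\int_G|f(xs)|^p\,\dd\lambda_G(x)\right)\dd\lambda_H(s).
\]

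The decisive step, and the one I expect to be the main obstacle, is the evaluation of the inner integral. For a left Haar measure a right translate produces a modular factor, namely $\int_G|f(xs)|^p\,\dd\lambda_G(x)=\Delta_G(s^{-1})\|f\|_{L^p(G)}^p$. Here compactness of $H$ is essential: the restriction $\Delta_G|_H$ is a continuous homomorphism from the compact group $H$ into $(0,\infty)$, hence its image is a compact subgroup of $(0,\infty)$ and therefore trivial, so $\Delta_G(s^{-1})=1$ for every $s\in H$ (equivalently $\Delta_G|_H=\Delta_H=1$, exactly as observed in the proof of the adjointness relation~(\ref{Jxi.Jxi*})). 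Consequently the inner integral equals $\|f\|_{L^p(G)}^p$ independently of $s$, and integrating the constant over $H$ contributes a factor $\lambda_H(H)$. Combining these and using $p/q+1=p$ gives
\[
\|T_\xi(f)\|_{L^p(G)}^p\le\lambda_H(H)^{p/q}\cdot\lambda_H(H)\cdot\|f\|_{L^p(G)}^p=\lambda_H(H)^p\,\|f\|_{L^p(G)}^p.
\]
Taking $p$-th roots yields $\|T_\xi(f)\|_{L^p(G)}\le\lambda_H(H)\,\|f\|_{L^p(G)}$, proving boundedness with operator norm at most $\lambda_H(H)$; in particular, when $\lambda_H$ is the probability Haar measure the bound is $1$, so $T_\xi$ is a contraction.
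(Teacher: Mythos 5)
Your proposal is correct and follows essentially the same route as the paper: the pointwise bound via $|\xi(s)|=1$, H\"older/Jensen on $(H,\lambda_H)$ giving the constant $\lambda_H(H)^{p/q}=\lambda_H(H)^{p-1}$, Tonelli, and the observation that $\Delta_G|_H\equiv 1$ because $H$ is compact, yielding $\|T_\xi\|\le\lambda_H(H)$. No gaps; nothing further to add.
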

\begin{proof}
Let $f\in\mathcal{C}_c(G)$ be given. Since $H$ is compact and $\lambda_H(H)<\infty$, we have 
\begin{equation}\label{altJp}
\left(\int_H|f(ys)|\dd\lambda_H(s)\right)^p\le\lambda_H(H)^{p-1}\int_H|f(ys)|^p\dd\lambda_H(s),
\end{equation}
for each $y\in G$. Using compactness of $H$, we get $\Delta_G|_H=\Delta_H=1$. Then, compactness of $H$, implies that $\Delta_G(s)=1$ for all $s\in H$. Thus, we get   
\begin{align*}
\|T_\xi(f)\|_{L^p(G)}^p
&=\int_G\left|\int_Hf(xs)\overline{\xi(s)}\dd\lambda_H(s)\right|^p\dd\lambda_G(x)
\\&\le\int_G\left(\int_H|f(xs)|\dd\lambda_H(s)\right)^p\dd\lambda_G(x)
\\&\le\lambda_H(H)^{p-1}\int_G\int_H|f(xs)|^p\dd\lambda_H(s)\dd\lambda_G(x)
\\&=\lambda_H(H)^{p-1}\int_H\int_G|f(xs)|^p\dd\lambda_G(x)\dd\lambda_H(s)
\\&=\lambda_H(H)^{p-1}\int_H\int_G|f(x)|^p\dd\lambda_G(xs^{-1})\dd\lambda_H(s)
\\&=\lambda_H(H)^{p-1}\|f\|_{L^p(G)}^p\int_H\Delta_G(s^{-1})\dd\lambda_H(s)
=\lambda_H(H)^{p}\|f\|_{L^p(G)}^p,
\end{align*}
which guarantees that $\|T_\xi(f)\|_{L^p(G)}\le\lambda_H(H)\|f\|_{L^p(G)}$. Since $f\in\mathcal{C}_c(G)$ was arbitrary, we conclude that the linear map 
$T_\xi:(\mathcal{C}_c(G),\|.\|_{L^p(G)})\to(M_\xi(G,H),\|.\|_{L^p(G)})$ 
is bounded with $\|T_\xi\|\le\lambda_H(H)$. In particular, if $\lambda_H$ is the probability Haar measure of $H$ then the operator norm $\|T_\xi\|\le1$. So the linear map $T_\xi:(\mathcal{C}_c(G),\|.\|_{L^p(G)})\to(M_\xi(G,H),\|.\|_{L^p(G)})$ is a contraction.
\end{proof}

\begin{corollary}\label{Js.p.Gcompact}
{\it Let $G$ be a compact group and $1\le p<\infty$.
Suppose $H$ is a closed subgroup of $G$ and $\xi\in\chi(H)$.  The linear map 
$T_\xi:(\mathcal{C}_c(G),\|.\|_{L^p(G)})\to(M_\xi(G,H),\|.\|_{L^p(G)})$ 
is bounded. In particular, if $\lambda_H$ is the probability Haar measure 
of $H$ then the linear operator  
$T_\xi:(\mathcal{C}_c(G),\|.\|_{L^p(G)})\to(M_\xi(G,H),\|.\|_{L^p(G)})$ 
is a contraction.}
\end{corollary}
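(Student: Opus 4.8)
The plan is to deduce this corollary directly from Theorem \ref{Js.p.Ncompact}, since the only structural difference between the two statements is a reshuffling of the compactness hypothesis: in the theorem $H$ is assumed compact while $G$ is an arbitrary locally compact group, whereas here $G$ is assumed compact and $H$ is merely closed. The content of the corollary is therefore to verify that the latter hypotheses already imply the former.

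First I would observe that when $G$ is compact, any closed subgroup $H$ is automatically compact. Indeed, $H$ is a closed subset of the compact space $G$, and a closed subset of a compact space is compact. Consequently the standing hypotheses of Theorem \ref{Js.p.Ncompact} — that $G$ is locally compact (every compact group is in particular locally compact) and that $H$ is a compact subgroup — are both satisfied in the present setting, with no additional work required.

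With this reduction in hand, I would simply invoke Theorem \ref{Js.p.Ncompact}. That theorem yields boundedness of $T_\xi:(\mathcal{C}_c(G),\|.\|_{L^p(G)})\to(M_\xi(G,H),\|.\|_{L^p(G)})$ together with the operator norm bound $\|T_\xi\|\le\lambda_H(H)$; and when $\lambda_H$ is the probability Haar measure we have $\lambda_H(H)=1$, so $T_\xi$ is a contraction. Both assertions of the corollary thus follow verbatim.

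There is no genuine obstacle here. The entire substance is the elementary topological fact that a closed subgroup of a compact group is compact, which guarantees the two ingredients on which the estimate of Theorem \ref{Js.p.Ncompact} rests — the finiteness $\lambda_H(H)<\infty$ and the triviality $\Delta_G|_H=\Delta_H=1$ of the modular function on $H$. The only point worth recording explicitly is this passage of hypotheses, after which the proof is immediate.
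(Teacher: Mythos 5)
Your proof is correct and matches the paper's intent exactly: the paper states this as a corollary of Theorem \ref{Js.p.Ncompact} with no written proof, precisely because the only content is the observation you make, namely that a closed subgroup of a compact group is compact (and a compact group is locally compact), so the theorem applies verbatim.
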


\begin{remark}
If $\xi=1$ is the trivial character then Theorem \ref{Js.p.Ncompact} coincides with Proposition 3.4 of \cite{AGHF.BMMSS}, if $M_1(G,H)$ is identified with $\mathcal{C}_c(G/H)$.
\end{remark}

From now on, we assume that $\lambda_H$ is the probability Haar measure on $H$ and hence the linear operator $T_\xi:(\mathcal{C}_c(G),\|.\|_{L^p(G)})\to(M_\xi(G,H),\|.\|_{L^p(G)})$ is a contraction.

We then conclude the following property of $\|.\|_{L^p(G)}$.

\begin{proposition}\label{Jxi.p.p.inf}
{\it Let $G$ be a locally compact group and $H$ be a compact subgroup of $G$. Suppose $1\le p<\infty$, $\xi\in\chi(H)$ and $\psi\in M_\xi(G,H)$. Then,  
\[
\|\psi\|_{L^p(G)}=\inf\left\{\|f\|_{L^p(G)}:f\in\mathcal{C}_c(G), \ T_\xi(f)=\psi\right\}.
\]
}\end{proposition}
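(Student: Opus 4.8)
The plan is to show the claimed infimum formula by establishing two inequalities. First I would prove $\|\psi\|_{L^p(G)} \le \inf\{\|f\|_{L^p(G)} : f\in\mathcal{C}_c(G),\ T_\xi(f)=\psi\}$, and then the reverse inequality by exhibiting a particular competitor $f$ that attains the bound, namely $f=\psi$ itself.

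For the lower bound on the infimum, let $f\in\mathcal{C}_c(G)$ be any function with $T_\xi(f)=\psi$. Since we are now assuming $\lambda_H$ is the probability Haar measure, Theorem~\ref{Js.p.Ncompact} tells us that $T_\xi$ is a contraction in the $\|.\|_{L^p(G)}$-norm, so
\[
\|\psi\|_{L^p(G)} = \|T_\xi(f)\|_{L^p(G)} \le \|f\|_{L^p(G)}.
\]
Taking the infimum over all such $f$ yields $\|\psi\|_{L^p(G)}\le\inf\{\|f\|_{L^p(G)}:T_\xi(f)=\psi\}$. This direction is immediate from the contraction property and requires no computation.

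For the reverse inequality, it suffices to produce one admissible $f$ whose norm equals $\|\psi\|_{L^p(G)}$. The natural candidate is $f=\psi$. By Proposition~\ref{Jxi.Ncompact.Proj}(1) we have $\psi\in\mathcal{C}_c(G)$, so $\psi$ is a legitimate element of $\mathcal{C}_c(G)$. Moreover, since $\psi\in M_\xi(G,H)$ and $T_\xi$ restricts to the identity on $M_\xi(G,H)$ — this follows from the projection identity $T_\xi\circ T_\xi=T_\xi$ in Proposition~\ref{Jxi.Ncompact.Proj}(2) together with the fact that $T_\xi$ maps $\mathcal{C}_c(G)$ onto $M_\xi(G,H)$, so every element of $M_\xi(G,H)$ is fixed by $T_\xi$ — we get $T_\xi(\psi)=\psi$. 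One can verify $T_\xi(\psi)=\psi$ directly as well: for $x\in G$, using the covariance $\psi(xs)=\xi(s)\psi(x)$ and $|\xi(s)|=1$,
\[
T_\xi(\psi)(x)=\int_H\psi(xs)\overline{\xi(s)}\dd\lambda_H(s)=\psi(x)\int_H|\xi(s)|^2\dd\lambda_H(s)=\psi(x),
\]
since $\lambda_H(H)=1$. Thus $f=\psi$ satisfies $T_\xi(f)=\psi$, and $\|f\|_{L^p(G)}=\|\psi\|_{L^p(G)}$, which forces the infimum to be at most $\|\psi\|_{L^p(G)}$.

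Combining the two inequalities gives the equality. The only point requiring care is that $\psi$ genuinely lies in $\mathcal{C}_c(G)$ so that it is an eligible competitor in the infimum; this is exactly where the compactness of $H$ enters, via Proposition~\ref{Jxi.Ncompact.Proj}(1). I do not expect any real obstacle here: the argument is a standard quotient-norm computation in which the projection $T_\xi$ realizes its own image isometrically on covariant functions, and the attainment of the infimum at $f=\psi$ is what distinguishes this from a general quotient-norm statement where the infimum need not be attained.
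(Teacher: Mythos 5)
Your proof is correct and follows essentially the same route as the paper: the lower bound comes from the contraction property of $T_\xi$ (Theorem \ref{Js.p.Ncompact}), and the infimum is attained at $f=\psi$ itself, using Proposition \ref{Jxi.Ncompact.Proj} to see that $\psi\in\mathcal{C}_c(G)$ and $T_\xi(\psi)=\psi$. The direct verification of $T_\xi(\psi)=\psi$ you include is a harmless addition; nothing further is needed.
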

\begin{proof}
Let $\psi\in M_\xi(G,H)$ be given. Suppose that $\mathcal{B}_\psi:=\{f\in\mathcal{C}_c(G): T_\xi(f)=\psi\}$ and also $\gamma_\psi:=\inf\left\{\|f\|_{L^p(G)}:f\in \mathcal{B}_\psi\right\}$. Using Theorem \ref{Js.p.Ncompact}, for each $f\in\mathcal{B}_\psi$, we have $\|\psi\|_{L^p(G)}\le\|f\|_{L^p(G)}$. Hence, we get $\|\psi\|_{L^p(G)}\le\gamma_\psi$. Now, we claim that $\|\psi\|_{L^p(G)}\ge\gamma_\psi$ as well. To this end, since $H$ is compact, using Proposition \ref{Jxi.Ncompact.Proj}, we have $\psi\in\mathcal{C}_c(G)$ and $T_\xi(\psi)=\psi$. Therefore, $\psi\in\mathcal{B}_\psi$ and hence we get $\|\psi\|_{L^1(G)}\ge\gamma_\psi$, which completes the proof.
\end{proof}

\begin{remark}
Let $\xi=1$ be the trivial character of $H$. Then Proposition \ref{Jxi.p.p.inf} is a consequence of Prospostion 3.4 and Corollary 3.4 of \cite{AGHF.BMMSS}, if  $M_1(G,H)$ is identified with $\mathcal{C}_c(G/H)$.
\end{remark}

Let $\mathcal{N}_\xi=\mathcal{N}_\xi(G,H)$ be the kernel of the linear map $T_\xi$ in $\mathcal{C}_c(G)$, that is the linear subspace given by 
\[
\mathcal{N}_\xi(G,H):=\{f\in\mathcal{C}_c(G):T_\xi(f)=0\}.
\]
Then, $\mathcal{N}_\xi(G,H)$ is a closed linear subspace of $(\mathcal{C}_c(G),\|.\|_{L^p(G)})$ for every $1\le p<\infty$. Using Proposition \ref{J.xi.Rk}(1), we also have  
\begin{equation}\label{Rf.xif}
\mathrm{span}\{R_hf-\xi(h)f:h\in H,\ f\in\mathcal{C}_c(G)\}\subseteq\mathcal{N}_\xi(G,H).
\end{equation}
For every $1\le p<\infty$, suppose that $\mathfrak{X}_\xi(G,H):=\mathcal{C}_c(G)/\mathcal{N}_\xi(G,H)$ is the quotient normed space of $\mathcal{N}_\xi(G,H)$ in $\mathcal{C}_c(G)$, that is 
\[
\mathcal{C}_c(G)/\mathcal{N}_\xi(G,H)=\left\{f+\mathcal{N}_\xi:f\in\mathcal{C}_c(G)\right\},
\]
with the quotient norm given by 
\begin{equation}\label{[p]}
\|f+\mathcal{N}_\xi\|_{[p]}:=\inf\left\{\|f+g\|_{L^p(G)}:g\in\mathcal{N}_\xi\right\}.
\end{equation}
Also, let $\mathfrak{X}_\xi^p(G,H)$ be the Banach completion of the normed linear space $\mathfrak{X}_\xi(G,H)$ with respect to the quotient norm 
$\|.\|_{[p]}$. We may denote $\mathfrak{X}_\xi(G,H)$ by $\mathfrak{X}_\xi$ and $\mathfrak{X}^p_\xi(G,H)$ by $\mathfrak{X}_\xi^p$ at times. 

We also denote by $L^p_\xi(G,H)$ the Banach completion of the normed linear space $M_\xi(G,H)$ with respect to $\|.\|_{L^p(G)}$. We shall use the completion norm by $\|.\|_{L^p(G)}$ or just $\|.\|_{p}$ as well. 

It is then clear that $L^p_\xi(G,H)\subseteq L^p(G)$. Also, one can see that 
\[
L^p_\xi(G,H)=\{\psi\in L^p(G):R_h\psi=\xi(h)\psi,\ {\rm for}\ h\in H\}
\]
Next we conclude the following characterisation of $L^p_\xi(G,H)$. 

\begin{theorem}\label{X1L1}
{\it Let $G$ be a locally compact group and $H$ be a compact subgroup of $G$. Suppose $1\le p<\infty$ and $\xi\in\chi(H)$. Then, 
 $\mathfrak{X}^p_\xi(G,H)$ is isometrically isomorphic to the Banach space $L^p_\xi(G,H)$.
}\end{theorem}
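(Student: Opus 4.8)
The plan is to realize the algebraic isomorphism $\mathcal{C}_c(G)/\mathcal{N}_\xi(G,H)\cong M_\xi(G,H)$ induced by $T_\xi$ as an isometry, and then pass to completions. First I would define the linear map $\Phi:\mathfrak{X}_\xi(G,H)\to M_\xi(G,H)$ by $\Phi(f+\mathcal{N}_\xi):=T_\xi(f)$. Well-definedness is immediate from the definition of $\mathcal{N}_\xi(G,H)$ as the kernel of $T_\xi$: if $f+\mathcal{N}_\xi=f'+\mathcal{N}_\xi$ then $f-f'\in\mathcal{N}_\xi$, whence $T_\xi(f)=T_\xi(f')$. Linearity is clear, and since $T_\xi$ maps $\mathcal{C}_c(G)$ onto $M_\xi(G,H)$ (as recalled in Section 2), the map $\Phi$ is surjective; it is injective because $\Phi(f+\mathcal{N}_\xi)=0$ forces $f\in\mathcal{N}_\xi$. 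Thus $\Phi$ is already a linear bijection at the level of the dense subspaces.

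The crux is to show that $\Phi$ is isometric, that is, $\|f+\mathcal{N}_\xi\|_{[p]}=\|T_\xi(f)\|_{L^p(G)}$ for every $f\in\mathcal{C}_c(G)$. For the inequality $\|T_\xi(f)\|_{L^p(G)}\le\|f+\mathcal{N}_\xi\|_{[p]}$, I would use that for any $g\in\mathcal{N}_\xi$ one has $T_\xi(f+g)=T_\xi(f)$, so by the contraction property of $T_\xi$ from Theorem \ref{Js.p.Ncompact} (with $\lambda_H$ the probability Haar measure) we obtain $\|T_\xi(f)\|_{L^p(G)}=\|T_\xi(f+g)\|_{L^p(G)}\le\|f+g\|_{L^p(G)}$; taking the infimum over $g\in\mathcal{N}_\xi$ yields the claim. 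For the reverse inequality the key observation is that the infimum defining the quotient norm is in fact attained: set $g:=T_\xi(f)-f$. By Proposition \ref{Jxi.Ncompact.Proj}(1) we have $T_\xi(f)\in M_\xi(G,H)\subseteq\mathcal{C}_c(G)$, so $g\in\mathcal{C}_c(G)$, and by the idempotency $T_\xi\circ T_\xi=T_\xi$ of Proposition \ref{Jxi.Ncompact.Proj}(2) we get $T_\xi(g)=T_\xi(f)-T_\xi(f)=0$, i.e. $g\in\mathcal{N}_\xi$. Since $f+g=T_\xi(f)$, this gives $\|f+\mathcal{N}_\xi\|_{[p]}\le\|f+g\|_{L^p(G)}=\|T_\xi(f)\|_{L^p(G)}$, completing the isometry.

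Finally, $\Phi$ is a surjective linear isometry between the normed spaces $\mathfrak{X}_\xi(G,H)$ and $M_\xi(G,H)$, which sit as dense subspaces of their respective Banach completions $\mathfrak{X}^p_\xi(G,H)$ and $L^p_\xi(G,H)$. By the standard fact that a surjective isometry between dense subspaces of two Banach spaces extends uniquely to an isometric isomorphism of the completions, $\Phi$ extends to the desired isometric isomorphism $\mathfrak{X}^p_\xi(G,H)\cong L^p_\xi(G,H)$.

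I expect the only genuinely delicate point to be the attainment of the quotient infimum, which is what makes the isometry exact rather than merely bounded above. This hinges entirely on $T_\xi$ being a projection onto $M_\xi(G,H)$ within $\mathcal{C}_c(G)$, so that the canonical representative $T_\xi(f)$ of the coset $f+\mathcal{N}_\xi$ already lies in the class and realizes the minimal norm. Everything else is bookkeeping with the first isomorphism theorem and the universal property of completions.
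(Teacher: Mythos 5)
Your proof is correct and follows essentially the same route as the paper: the paper defines the same map $U_\xi(f+\mathcal{N}_\xi):=T_\xi(f)$ and establishes the isometry by citing Proposition \ref{Jxi.p.p.inf}, whose proof is exactly your two-sided argument (the contraction property of $T_\xi$ for one inequality, and the attainment of the infimum via the projection property $T_\xi(T_\xi(f))=T_\xi(f)$ with $T_\xi(f)\in\mathcal{C}_c(G)$ for the other), before passing to completions. You have merely inlined that proposition; there is no substantive difference.
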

\begin{proof}
Invoking the structure of the spaces $\mathfrak{X}^p_\xi(G,H)$ and $L^p_\xi(G,H)$, it is enough to show that $(M_\xi(G,H),\|.\|_{L^p(G)})$ is isometrically isomorphic with the quotient normed space $(\mathfrak{X}_\xi(G,H),\|\cdot\|_{[p]})$. Since $T_\xi:\mathcal{C}_c(G)\to M_\xi(G,H)$ is a surjective linear map, we conclude that the linear space $M_\xi(G,H)$ is isomorphic with the quotient linear space $\mathfrak{X}_\xi(G,H)$ via the canonical linear map $U_\xi:\mathfrak{X}_\xi(G,H)\to M_\xi(G,H)$ given by $U_\xi(f+\mathcal{N}_\xi):=T_\xi(f)$ for every $f\in\mathcal{C}_c(G)$. Further, the isomorphism $U_\xi$ is not only algebraic, but also isometric, if the quotient linear space $\mathfrak{X}_\xi(G,H)$ is equipped with the classical quotient norm (\ref{[p]}). Using Proposition \ref{Jxi.p.p.inf}, for $f\in\mathcal{C}_c(G)$, we have 
\begin{align*}
\|U_\xi(f+\mathcal{N}_\xi)\|_{L^p(G)}
&=\|T_\xi(f)\|_{L^p(G)}
\\&=\inf\left\{\|h\|_{L^p(G)}:T_\xi(h)=T_\xi(f)\right\}
\\&=\inf\left\{\|h\|_{L^p(G)}:h-f\in\mathcal{N}_\xi\right\}
\\&=\inf\left\{\|f+g\|_{L^p(G)}:g\in\mathcal{N}_\xi\right\}=\|f+\mathcal{N}_\xi\|_{[p]}.
\end{align*}
\end{proof}

Invoking Theorem \ref{Js.p.Ncompact}, one can conclude that if $H$ is a compact subgroup of $G$ then the bounded linear map $T_\xi:(\mathcal{C}_c(G),\|.\|_{L^p(G)})\to(M_\xi(G,H),\|.\|_{L^p(G)})$ has a unique extension to a bounded linear map from $L^p(G)$ onto $L^p_\xi(G,H)$, which we still denote it by $T_\xi$. Then $T_\xi\circ R_h=\xi(h)T_\xi$ and $T_\xi\circ L_y=L_y\circ T_\xi$, for every $y\in G$ and $h\in H$. It is easy to see that the extended map $T_\xi:L^p(G)\to L^p_\xi(G,H)$ is given by $f\mapsto T_\xi(f)$, where  
\[
T_\xi(f)(x)=\int_Hf(xs)\overline{\xi(s)}\dd\lambda_H(s),\ {\rm for}\ x\in G.
\]
In particular, if $\lambda_H$ is the probability Haar measure 
of $H$, the extended linear operator $T_\xi:L^p(G)\to L^p_\xi(G,H)$ 
is a contraction. 

Invoking structure of $L^p_\xi(G,H)$ and since $L^p(G)$ is a Banach $L^1(G)$-module, we get that $L^p_\xi(G,H)$ is a Banach $L^1(G)$-submodule of $L^p(G)$ as well. In particular, we conclude that $L^1_\xi(G,H)$ is a closed left ideal in $L^1(G)$. 

We then prove the following the following multiplier property concerning convolution structure of $L^p_\xi(G,H)$ in terms of the linear operator $T_\xi$. 

\begin{proposition}
{\it Let $G$ be a locally compact group and $H$ be a compact subgroup of $G$. Suppose $\xi\in\chi(H)$ and $1\le p<\infty$. Then, $T_\xi:L^p(G)\to L^p(G)$ is a $L^1(G)$-multiplier.
}\end{proposition}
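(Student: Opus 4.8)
The plan is to unwind the definition of an $L^1(G)$-multiplier and reduce the entire statement to the identity already established in Theorem~\ref{mult.Cc} via a standard density argument. Recall that, since $L^p(G)$ is a Banach left $L^1(G)$-module under convolution, a bounded linear operator $T:L^p(G)\to L^p(G)$ is an $L^1(G)$-multiplier precisely when it commutes with the module action, that is, when
\[
T(f\ast_G g)=f\ast_G T(g)\qquad(f\in L^1(G),\ g\in L^p(G)).
\]
Boundedness of $T_\xi:L^p(G)\to L^p(G)$ is already in hand from the extension discussed after Theorem~\ref{X1L1} (indeed it is a contraction when $\lambda_H$ is the probability Haar measure), so the whole task reduces to verifying this commutation identity for all $f\in L^1(G)$ and $g\in L^p(G)$.

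First I would record the continuity estimates that drive the approximation. For fixed $f\in L^1(G)$ the map $g\mapsto f\ast_G g$ is bounded from $L^p(G)$ to $L^p(G)$ with norm at most $\|f\|_{L^1(G)}$, and $T_\xi$ is bounded on $L^p(G)$; hence both $g\mapsto T_\xi(f\ast_G g)$ and $g\mapsto f\ast_G T_\xi(g)$ are bounded linear operators on $L^p(G)$. Symmetrically, for fixed $g\in L^p(G)$ the map $f\mapsto f\ast_G g$ is bounded from $L^1(G)$ to $L^p(G)$ with norm at most $\|g\|_{L^p(G)}$, so both $f\mapsto T_\xi(f\ast_G g)$ and $f\mapsto f\ast_G T_\xi(g)$ are bounded from $L^1(G)$ into $L^p(G)$ (note that $T_\xi(g)$ is a fixed element of $L^p(G)$).

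The argument then proceeds in two density steps. Fixing $f\in\mathcal{C}_c(G)$, the two bounded operators $g\mapsto T_\xi(f\ast_G g)$ and $g\mapsto f\ast_G T_\xi(g)$ agree on the $\|\cdot\|_{L^p(G)}$-dense subspace $\mathcal{C}_c(G)$ by Theorem~\ref{mult.Cc}, hence they agree on all of $L^p(G)$; this gives the identity for every $f\in\mathcal{C}_c(G)$ and every $g\in L^p(G)$. Next, fixing any $g\in L^p(G)$, the two operators $f\mapsto T_\xi(f\ast_G g)$ and $f\mapsto f\ast_G T_\xi(g)$ agree on the $\|\cdot\|_{L^1(G)}$-dense subspace $\mathcal{C}_c(G)$ by the previous step, hence they agree on all of $L^1(G)$. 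This yields $T_\xi(f\ast_G g)=f\ast_G T_\xi(g)$ for all $f\in L^1(G)$ and $g\in L^p(G)$, which completes the verification.

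There is no genuine obstacle here: the real content is entirely in the pointwise computation already carried out in Theorem~\ref{mult.Cc}, and the remaining work is the routine propagation of that equality through the joint continuity of convolution and the boundedness of $T_\xi$. The only point requiring a little care is to perform the two approximations in the correct order---first in the $L^p$-variable using a continuous-function multiplier, then in the $L^1$-variable---so that at each stage one is genuinely comparing two bounded operators on the relevant space.
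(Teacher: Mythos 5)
Your argument is correct and follows essentially the same route as the paper: establish the identity $T_\xi(f\ast_G g)=f\ast_G T_\xi(g)$ on $\mathcal{C}_c(G)\times\mathcal{C}_c(G)$ via Theorem~\ref{mult.Cc}, then propagate it by density and the boundedness of $T_\xi$ and of the module action, first in the $L^p$-variable and then in the $L^1$-variable. Your write-up is in fact slightly more explicit about the second density step, which the paper dispatches with ``using a similar method.''
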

\begin{proof}
Let $f\in\mathcal{C}_c(G)$ and $g\in L^p(G)$. Suppose $(g_n)\subset\mathcal{C}_c(G)$ with $g=\lim_n g_n$ in $L^p(G)$. Using boundedness of the linear operator $T_\xi:L^p(G)\to L^p(G)$, continuity of the module action in each argument, and Theorem \ref{mult.Cc}, we get 
\begin{align*}
T_\xi(f\ast_G g)
&=T_\xi(\lim_nf\ast_Gg_n)
\\&=\lim_nT_\xi(f\ast_Gg_n)
\\&=\lim_nf\ast_GT_\xi(g_n)=f\ast_GT_\xi(g).
\end{align*}
Using a similar method, we get $T_\xi(f\ast_G g)=f\ast_GT_\xi(g)$, if $f\in L^1(G)$.
\end{proof}

For $1\le p<\infty$, let $\mathcal{N}_\xi^p(G,H)$ be the kernel of extension of the linear map $T_\xi$ in $L^p(G)$, that is the linear subspace given by 
\[
\mathcal{N}_\xi^p(G,H):=\left\{f\in L^p(G):T_\xi(f)=0\ {\rm in}\ L^p_\xi(G,H)\right\}.
\]

\begin{proposition}\label{Lp.main.Np}
{\it Let $G$ be a locally compact group and $H$ be a compact subgroup of $G$. Suppose $1\le p<\infty$ and $\xi\in\chi(H)$. Then, $\mathcal{N}_\xi^p(G,H)$ is the closure of $\mathcal{N}_\xi(G,H)$ in $L^p(G)$.
}\end{proposition}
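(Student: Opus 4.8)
The plan is to establish the equality by a two-sided inclusion. Write $K$ for the $L^p(G)$-closure of $\mathcal{N}_\xi(G,H)$; the goal is to show $\mathcal{N}_\xi^p(G,H)=K$.

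The inclusion $K\subseteq\mathcal{N}_\xi^p(G,H)$ is the softer one. First I would note that $\mathcal{N}_\xi^p(G,H)$ is closed in $L^p(G)$, being by definition the kernel of the bounded linear operator $T_\xi:L^p(G)\to L^p_\xi(G,H)$, whose existence and boundedness are furnished by Theorem \ref{Js.p.Ncompact}. Since the extended operator agrees with the original $T_\xi$ on $\mathcal{C}_c(G)$, every element of $\mathcal{N}_\xi(G,H)$ automatically lies in $\mathcal{N}_\xi^p(G,H)$; as the latter is closed, it contains the closure $K$ of $\mathcal{N}_\xi(G,H)$.

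The reverse inclusion $\mathcal{N}_\xi^p(G,H)\subseteq K$ is where the real content sits. Given $f\in\mathcal{N}_\xi^p(G,H)$, so that $T_\xi(f)=0$, I would invoke density of $\mathcal{C}_c(G)$ in $L^p(G)$ to choose a sequence $(f_n)\subset\mathcal{C}_c(G)$ with $f_n\to f$ in $L^p(G)$. The key device is to correct each $f_n$ so that it falls into the kernel: set $g_n:=f_n-T_\xi(f_n)$. Two facts from Proposition \ref{Jxi.Ncompact.Proj} make this work. First, since $H$ is compact, $T_\xi(f_n)\in M_\xi(G,H)\subseteq\mathcal{C}_c(G)$, so $g_n\in\mathcal{C}_c(G)$. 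Second, $T_\xi$ is idempotent on $\mathcal{C}_c(G)$, that is $T_\xi\circ T_\xi=T_\xi$, whence $T_\xi(g_n)=T_\xi(f_n)-T_\xi(T_\xi(f_n))=0$; thus $g_n\in\mathcal{N}_\xi(G,H)$.

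Finally, I would pass to the limit. By continuity of the extended $T_\xi$ together with $f_n\to f$, we obtain $T_\xi(f_n)\to T_\xi(f)=0$ in $L^p(G)$, and therefore $g_n=f_n-T_\xi(f_n)\to f$ in $L^p(G)$. Hence $f$ is an $L^p$-limit of members of $\mathcal{N}_\xi(G,H)$, i.e. $f\in K$, which closes the argument. The step I expect to require the most care is the justification that $T_\xi(f_n)\to 0$: it rests squarely on the $L^p$-boundedness of the extended operator provided by Theorem \ref{Js.p.Ncompact}, since without this continuity the correction $f_n\mapsto f_n-T_\xi(f_n)$ would not converge back to $f$.
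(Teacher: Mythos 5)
Your proof is correct and follows essentially the same route as the paper: both directions rest on the continuity of the extended $T_\xi$ on $L^p(G)$, and the reverse inclusion uses exactly the paper's correction device $h\mapsto h-T_\xi(h)$, justified by $M_\xi(G,H)\subseteq\mathcal{C}_c(G)$ and the idempotence of $T_\xi$ from Proposition \ref{Jxi.Ncompact.Proj}. The only cosmetic difference is that you phrase the approximation with a sequence while the paper runs a single $\varepsilon$-estimate.
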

\begin{proof}
Let $\mathcal{X}$ be the closure of $\mathcal{N}_\xi(G,H)$ in $L^p(G)$. Suppose $f\in\mathcal{X}$ is given. Then, we have $f\in L^p(G)$ and 
$\lim_nf_n=f$ for some sequence $(f_n)\subset\mathcal{N}_\xi(G,H)$. 
Continuity of the extended $T_\xi:L^p(G)\to L^p_\xi(G,H)$ implies that $T_\xi(f)=0$ in $L^p_\xi(G,H)$. Therefore, we get $f\in \mathcal{N}_\xi^p(G,H)$.
Since $f$ was arbitrary, we deduce that $\mathcal{X}\subseteq\mathcal{N}_\xi^p(G,H)$. Conversely, let $f\in\mathcal{N}^p(G,H)$ be given. Then, $f\in L^p(G)$ with $T_\xi(f)=0$ in $L^p_\xi(G,H)$.
Suppose that $\varepsilon>0$ is given. Pick $h\in\mathcal{C}_c(G)$ with $\|f-h\|_{L^p(G)}<\varepsilon/2$.  Let $\phi:=T_\xi(h)$. We then define the function $g:G\to\mathbb{C}$ by $g(x):=h(x)-\phi(x)$ for every $x\in G$. Then, we have $g\in\mathcal{C}_c(G)$ and $T_\xi(g)=0$. Indeed, for $x\in G$, we have 
\[
T_\xi(g)(x)=\phi(x)-\int_H\phi(xs)\overline{\xi(s)}d\lambda_H(s)=\phi(x)-\phi(x)=0.
\]
Hence, we get $g\in\mathcal{N}_\xi(G,H)$. Also, we have 
\begin{align*}
\|h-g\|_{L^p(G)}&=\|\phi\|_{L^p(G)}=\|T_\xi(h)\|_{L^p(G)}
\\&\le\|T_\xi(h-f)\|_{L^p(G)}+\|T_\xi(f)\|_{L^p(G)}\le\|h-f\|_{L^p(G)}<\frac{\varepsilon}{2}.
\end{align*}
Therefore, we achieve 
\[
\|f-g\|_{L^p(G)}\le\|f-h\|_{L^p(G)}+\|h-g\|_{L^p(G)}<\varepsilon,
\]
which implies that $f$ is in the closure of $\mathcal{N}_\xi(G,H)$ in $L^p(G)$, that is $f\in\mathcal{X}$. Since $f\in\mathcal{N}_\xi^p(G,H)$ was given, we conclude that $\mathcal{N}_\xi^p(G,H)\subseteq\mathcal{X}$ as well.\\
\end{proof}

We then have the following interesting characterization of the Banach space $L_\xi^p(G,H)$ as a quotient space of $L^p(G)$. 

\begin{theorem}\label{Lp.main}
{\it Let $G$ be a locally compact group and $H$ be a compact subgroup of $G$. Suppose $1\le p<\infty$ and $\xi\in\chi(H)$. The Banach space $L_\xi^p(G,H)$ is isometrically isomorphic to the quotient Banach space $L^p(G)/\mathcal{N}_\xi^p(G,H)$.
}\end{theorem}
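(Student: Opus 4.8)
The plan is to realize $L^p_\xi(G,H)$ as the range of the bounded idempotent $T_\xi$ on $L^p(G)$ and to exploit the fact that, under the standing assumption that $\lambda_H$ is the probability Haar measure, this idempotent is a contraction. First I would record that the extended operator $T_\xi:L^p(G)\to L^p(G)$ is idempotent. By Proposition \ref{Jxi.Ncompact.Proj}(2) we have $T_\xi\circ T_\xi=T_\xi$ on the dense subspace $\mathcal{C}_c(G)$; since the extended $T_\xi$ is bounded by Theorem \ref{Js.p.Ncompact} it is continuous, so approximating an arbitrary $f\in L^p(G)$ by $f_n\in\mathcal{C}_c(G)$ and passing to the limit yields $T_\xi(T_\xi(f))=T_\xi(f)$ for every $f\in L^p(G)$. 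In particular the range of $T_\xi$ is exactly $L^p_\xi(G,H)=\{\psi\in L^p(G):T_\xi(\psi)=\psi\}$, and $T_\xi$ restricts to the identity there.

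Next I would introduce the induced map $\overline{T_\xi}:L^p(G)/\mathcal{N}_\xi^p(G,H)\to L^p_\xi(G,H)$ defined by $\overline{T_\xi}(f+\mathcal{N}_\xi^p):=T_\xi(f)$. It is well-defined precisely because $\mathcal{N}_\xi^p(G,H)$ is the kernel of $T_\xi$; it is surjective because (as recorded in the text preceding Proposition \ref{Lp.main.Np}) the extended $T_\xi$ maps $L^p(G)$ onto $L^p_\xi(G,H)$; and it is injective by construction. The only substantive point left is that $\overline{T_\xi}$ is isometric, i.e.
\[
\|T_\xi(f)\|_{L^p(G)}=\inf\{\|f+g\|_{L^p(G)}:g\in\mathcal{N}_\xi^p(G,H)\}
\]
for every $f\in L^p(G)$.

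For the inequality $\le$, I would use that for any $g\in\mathcal{N}_\xi^p(G,H)$ one has $T_\xi(f)=T_\xi(f+g)$, and since $T_\xi$ is a contraction, $\|T_\xi(f)\|_{L^p(G)}=\|T_\xi(f+g)\|_{L^p(G)}\le\|f+g\|_{L^p(G)}$; taking the infimum over $g$ gives the bound. For the reverse inequality $\ge$, the key observation is that idempotency supplies an explicit minimiser: set $g_0:=T_\xi(f)-f$. Then $T_\xi(g_0)=T_\xi(T_\xi(f))-T_\xi(f)=0$, so $g_0\in\mathcal{N}_\xi^p(G,H)$, while $f+g_0=T_\xi(f)$. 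Hence the infimum is at most $\|f+g_0\|_{L^p(G)}=\|T_\xi(f)\|_{L^p(G)}$. Combining the two inequalities yields the isometry, and $\overline{T_\xi}$ is the desired isometric isomorphism.

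I do not anticipate a serious obstacle; the argument is the standard norm-one projection mechanism. The one place requiring care is the first step, namely verifying that the extension of $T_\xi$ remains idempotent on all of $L^p(G)$ rather than merely on $\mathcal{C}_c(G)$, and that its range coincides with $L^p_\xi(G,H)$; both follow from density of $\mathcal{C}_c(G)$ together with continuity of the extended operator, and they are exactly what legitimise the explicit minimiser $g_0=T_\xi(f)-f$. Equivalently, the whole statement can be packaged as the general fact that a contractive idempotent $P$ on a Banach space $X$ induces an isometric isomorphism $X/\ker P\cong P(X)$, applied here to $P=T_\xi$.
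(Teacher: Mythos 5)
Your proof is correct, but it takes a genuinely different route from the paper. The paper's own proof is a two-line reduction: it invokes Proposition \ref{Lp.main.Np} (identifying $\mathcal{N}_\xi^p(G,H)$ as the $L^p$-closure of $\mathcal{N}_\xi(G,H)$) together with a standard completion lemma (Lemma 3.4.4 of \cite{50}) to identify $\mathfrak{X}_\xi^p(G,H)$ with $L^p(G)/\mathcal{N}_\xi^p(G,H)$, and then quotes Theorem \ref{X1L1}, which already established the isometry $\mathfrak{X}_\xi^p(G,H)\cong L^p_\xi(G,H)$ at the level of $\mathcal{C}_c(G)$ via Proposition \ref{Jxi.p.p.inf}. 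You instead work directly on $L^p(G)$: you extend the idempotency $T_\xi\circ T_\xi=T_\xi$ from $\mathcal{C}_c(G)$ to $L^p(G)$ by density and continuity, identify $L^p_\xi(G,H)$ as the range (equivalently the fixed-point set) of the contractive idempotent $T_\xi$, note that $\mathcal{N}_\xi^p(G,H)$ is by definition its kernel, and then run the standard norm-one-projection argument, with the explicit minimiser $g_0=T_\xi(f)-f$ showing the quotient infimum is attained at $\|T_\xi(f)\|_{L^p(G)}$. Your argument is self-contained and bypasses both Proposition \ref{Lp.main.Np} and the external completion lemma entirely; it also makes transparent exactly where the normalisation $\lambda_H(H)=1$ is used (contractivity and idempotency both fail without it). What the paper's route buys is economy --- it reuses Theorem \ref{X1L1} and Proposition \ref{Lp.main.Np}, which are wanted elsewhere anyway (e.g.\ in the duality Theorem \ref{Lpdual}) --- whereas your route gives the cleaner conceptual statement that any contractive idempotent $P$ on a Banach space $X$ induces an isometric isomorphism $X/\ker P\cong P(X)$.
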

\begin{proof}
Applying Propostion \ref{Lp.main.Np} and Lemma 3.4.4 of \cite{50} we deduce that $\mathfrak{X}_\xi^p(G,H)$
is canonically isometric isomorphic to the quotient Banach space $L^p(G)/\mathcal{N}_\xi^p(G,H)$. Then, Theorem \ref{X1L1} implies that the Banach space $L_\xi^p(G,H)$ is canonically isometric isomorphic to the quotient Banach space $L^p(G)/\mathcal{N}_\xi^p(G,H)$.\\
\end{proof}

We continue by some results concerning adjoint of the linear map $T_\xi:L^p(G)\to L^p(G)$, when $H$ is compact. 

\begin{proposition}
{\it Let $G$ be a locally compact group and $H$ be a compact subgroup of $G$. Suppose $\xi\in\chi(H)$, and $1<p,q<\infty$ with $p^{-1}+q^{-1}=1$. The adjoint of the bounded linear map $T_\xi:L^p(G)\to L^p(G)$ can be identified by $T_\xi:L^q(G)\to L^q(G)$.
}\end{proposition}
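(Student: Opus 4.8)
The plan is to read the adjoint off directly from the symmetry relation \eqref{Jxi.Jxi*}, which already asserts that $T_\xi$ is symmetric with respect to the sesquilinear pairing $\langle f,g\rangle=\int_G f(x)\overline{g(x)}\,\mathrm{d}\lambda_G(x)$ on $\mathcal{C}_c(G)$. First I would fix the duality: for $1<p,q<\infty$ with $p^{-1}+q^{-1}=1$, the assignment $g\mapsto\Lambda_g$, where $\Lambda_g(f):=\langle f,g\rangle$, is the canonical (conjugate-linear) isometric identification of $L^q(G)$ with $L^p(G)^*$. By H\"older's inequality the pairing extends continuously from $\mathcal{C}_c(G)\times\mathcal{C}_c(G)$ to a jointly continuous form on $L^p(G)\times L^q(G)$ with $|\langle f,g\rangle|\le\|f\|_{L^p(G)}\|g\|_{L^q(G)}$. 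Under this identification, the Banach-space adjoint $T_\xi^*$ of $T_\xi:L^p(G)\to L^p(G)$ corresponds to the unique bounded operator $S:L^q(G)\to L^q(G)$ satisfying $\langle T_\xi f,g\rangle=\langle f,Sg\rangle$ for all $f\in L^p(G)$ and $g\in L^q(G)$; the goal is to show $S=T_\xi$, where the right-hand side is the extension of $T_\xi$ to $L^q(G)$ furnished by Theorem \ref{Js.p.Ncompact} applied with exponent $q$.

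The core of the matter is that the required identity $\langle T_\xi f,g\rangle=\langle f,T_\xi g\rangle$ is precisely \eqref{Jxi.Jxi*} when $f,g\in\mathcal{C}_c(G)$, so only an upgrade from $\mathcal{C}_c(G)$ to all of $L^p(G)\times L^q(G)$ by density and continuity remains. Concretely, given $f\in L^p(G)$ and $g\in L^q(G)$, I would pick sequences $(f_n),(g_n)\subset\mathcal{C}_c(G)$ with $f_n\to f$ in $L^p(G)$ and $g_n\to g$ in $L^q(G)$. Theorem \ref{Js.p.Ncompact}, used with exponents $p$ and $q$ respectively, gives $T_\xi f_n\to T_\xi f$ in $L^p(G)$ and $T_\xi g_n\to T_\xi g$ in $L^q(G)$, and the joint continuity of the pairing lets me pass to the limit on both sides of $\langle T_\xi f_n,g_n\rangle=\langle f_n,T_\xi g_n\rangle$ to obtain $\langle T_\xi f,g\rangle=\langle f,T_\xi g\rangle$. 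Since $S$ is uniquely determined by the nondegeneracy of the pairing, this forces $S=T_\xi$, that is $T_\xi^*\Lambda_g=\Lambda_{T_\xi g}$ for every $g\in L^q(G)$, which is exactly the asserted identification of $T_\xi^*$ with $T_\xi:L^q(G)\to L^q(G)$.

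I expect no serious obstacle here: the substantive content, namely the self-pairing symmetry, is already recorded in \eqref{Jxi.Jxi*}, and what remains is a routine density and continuity extension together with the standard $L^p$–$L^q$ duality. The only points requiring care are bookkeeping ones: confirming that the extended operator $T_\xi$ on $L^q(G)$ is bounded (so that $T_\xi g_n\to T_\xi g$), which is Theorem \ref{Js.p.Ncompact} with $p$ replaced by $q$, and keeping track of the conjugate-linearity of $g\mapsto\Lambda_g$ so that the statement $T_\xi^*\cong T_\xi$ is phrased consistently. I would also note that the restriction to $1<p,q<\infty$ is genuinely used: it both supplies the isomorphism $L^p(G)^*\cong L^q(G)$ and ensures $q<\infty$, so that Theorem \ref{Js.p.Ncompact} yields a bounded $T_\xi$ on $L^q(G)$ and $\mathcal{C}_c(G)$ remains dense in $L^q(G)$; the excluded case $p=1$ would force working with $T_\xi$ on $L^\infty(G)$, where this density argument breaks down.
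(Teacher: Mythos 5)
Your proposal is correct and follows essentially the same route as the paper: both rest on the symmetry relation \eqref{Jxi.Jxi*} for $f,g\in\mathcal{C}_c(G)$, the canonical identification $L^q(G)\cong L^p(G)^*$ via $g\mapsto\Lambda_g$, and a density argument to extend to all of $L^p(G)\times L^q(G)$. The only cosmetic difference is that you pass to the limit jointly in both variables using H\"older continuity of the pairing, whereas the paper extends first in $f$ and then in $g$; your explicit remarks on the boundedness of $T_\xi$ on $L^q(G)$ and the conjugate-linearity of $g\mapsto\Lambda_g$ are sound bookkeeping that the paper leaves implicit.
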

\begin{proof}
Let $g\mapsto \Lambda_g$ be the canonical isometric isomorphism identification of  $L^q(G)$ as $L^p(G)^*$, where the bounded linear functional $\Lambda_g:L^p(G)\to\mathbb{C}$ is given by $\Lambda_g(f):=\langle f,g\rangle$ for every $f\in L^p(G)$. 
Suppose that $f,g\in\mathcal{C}_c(G)$ are given. Invoking the abstract structure of the adjoint linear map $T_\xi^*:L^p(G)^*\to L^p(G)^*$, and using (\ref{Jxi.Jxi*}), we get 
\[
T_\xi^*(\Lambda_g)(f)=\Lambda_g(T_\xi(f))=\langle T_\xi(f),g\rangle=\langle f,T_\xi(g)\rangle=\Lambda_{T_\xi(g)}(f).
\]
Since $f\in\mathcal{C}_c(G)$ was arbitrary, and using density of $\mathcal{C}_c(G)$ in $L^p(G)$, we get $T_\xi^*(\Lambda_g)=\Lambda_{T_\xi(g)}$. Then density of $\mathcal{C}_c(G)$ in $L^q(G)$ implies that $T_\xi^*(\Lambda_g)=\Lambda_{T_\xi(g)}$ for every $g\in L^q(G)$. So, $T_\xi$ identifies $T_\xi^*$.
\end{proof}

\begin{corollary}
{\it Let $G$ be a locally compact group and $H$ be a compact subgroup of $G$. Suppose $\xi\in\chi(H)$, $1\le p<\infty$, and $\lambda_{H}$ be the probability Haar measure of $H$. Then, $T_\xi:L^p(G)\to L^p(G)$ is the projection onto $L^p_\xi(G,H)$. In particular, $T_\xi:L^2(G)\to L^2(G)$ is the orthogonal projection onto $L_\xi^2(G,H)$.
}\end{corollary}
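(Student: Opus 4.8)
The plan is to verify that the extended operator $T_\xi:L^p(G)\to L^p(G)$ is an idempotent whose range is exactly $L^p_\xi(G,H)$ and on which it restricts to the identity; for the case $p=2$ it then remains only to record self-adjointness. None of the steps require new computation: each is obtained by upgrading an already-established identity on the dense subspace $\mathcal{C}_c(G)$ to all of $L^p(G)$ via boundedness and continuity of $T_\xi$.

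First I would establish idempotency on all of $L^p(G)$. Proposition \ref{Jxi.Ncompact.Proj}(2) gives $T_\xi\circ T_\xi=T_\xi$ on $\mathcal{C}_c(G)$, precisely when $\lambda_H$ is the probability Haar measure. Since the extended $T_\xi:L^p(G)\to L^p(G)$ is bounded (indeed a contraction, by Theorem \ref{Js.p.Ncompact} and the subsequent extension) and $\mathcal{C}_c(G)$ is dense in $L^p(G)$, continuity of both $T_\xi$ and $T_\xi\circ T_\xi$ promotes this identity to the whole space, so $T_\xi^2=T_\xi$ on $L^p(G)$.

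Next I would identify the range. The extended map is onto $L^p_\xi(G,H)$ by its very construction; moreover, for $\psi\in M_\xi(G,H)$ one has $\psi\in\mathcal{C}_c(G)$ and $T_\xi(\psi)=\psi$, exactly as already used in the proof of Proposition \ref{Jxi.p.p.inf}. A density-and-continuity argument then extends $T_\xi\psi=\psi$ to every $\psi\in L^p_\xi(G,H)$. Hence $T_\xi$ fixes $L^p_\xi(G,H)$ pointwise, its range is precisely $L^p_\xi(G,H)$, and consequently $T_\xi$ is a bounded (contractive) projection of $L^p(G)$ onto $L^p_\xi(G,H)$.

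For the particular case $p=2$ it remains to prove self-adjointness in the Hilbert-space sense. This is immediate from the identity (\ref{Jxi.Jxi*}), namely $\langle T_\xi f,g\rangle=\langle f,T_\xi g\rangle$ for $f,g\in\mathcal{C}_c(G)$: by density of $\mathcal{C}_c(G)$ in $L^2(G)$, boundedness of $T_\xi$, and continuity of the inner product, the identity persists for all $f,g\in L^2(G)$, so $T_\xi^*=T_\xi$ on $L^2(G)$. A bounded self-adjoint idempotent on a Hilbert space is the orthogonal projection onto its range, which here is $L^2_\xi(G,H)$. The only point where genuine care is needed — the main obstacle, such as it is — is the passage from $M_\xi(G,H)$ to its completion $L^p_\xi(G,H)$: one must check both that $T_\xi$ acts as the identity on the completed space and that its range does not exceed $L^p_\xi(G,H)$. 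Everything else reduces to routine density and continuity arguments.
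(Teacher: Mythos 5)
Your proposal is correct and follows exactly the route the paper intends: the corollary is stated without proof, but it is clearly meant to follow from the idempotency $T_\xi\circ T_\xi=T_\xi$ of Proposition \ref{Jxi.Ncompact.Proj}(2) extended by density and boundedness, the fact that the extension maps $L^p(G)$ onto $L^p_\xi(G,H)$ and fixes $M_\xi(G,H)$, and (for $p=2$) the self-adjointness established in the preceding proposition via the identity (\ref{Jxi.Jxi*}). Your write-up supplies precisely these steps, so there is nothing to add.
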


Next we obtain the following characterization for the dual space $L^p_\xi(G,H)^*$, if $p>1$.

\begin{theorem}\label{Lpdual}
{\it Let $G$ be a locally compact group and $H$ be a compact subgroup of $G$. Suppose $\xi\in\chi(H)$, and $1<p,q<\infty$ with $p^{-1}+q^{-1}=1$. Then $L^q_\xi(G,H)$ is isometrically isomorphic to $L^p_\xi(G,H)^*$. 
}\end{theorem}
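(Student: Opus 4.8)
The plan is to exhibit an explicit isometric isomorphism $\Phi\colon L^q_\xi(G,H)\to L^p_\xi(G,H)^*$ built from the same sesquilinear pairing $\langle\psi,\phi\rangle=\int_G\psi\overline{\phi}\,\dd\lambda_G$ that identifies $L^q(G)$ with $L^p(G)^*$. For $\phi\in L^q_\xi(G,H)$ I set $\Phi(\phi):=\Lambda_\phi$, where $\Lambda_\phi(\psi):=\langle\psi,\phi\rangle$ for $\psi\in L^p_\xi(G,H)$. Since $L^p_\xi(G,H)\subseteq L^p(G)$ and $L^q_\xi(G,H)\subseteq L^q(G)$, H\"older's inequality immediately gives $\Lambda_\phi\in L^p_\xi(G,H)^*$ with $\|\Lambda_\phi\|\le\|\phi\|_q$, so $\Phi$ is a well-defined contraction; conjugate-linearity of $\Phi$ is immediate from the pairing, matching the convention used for $L^q(G)\cong L^p(G)^*$.

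To prove that $\Phi$ is isometric I would produce, for each nonzero $\phi\in L^q_\xi(G,H)$, an extremiser attaining the pairing \emph{inside} the covariant subspace. Taking $\psi:=\|\phi\|_q^{-q/p}\,|\phi|^{q-2}\phi$ (with the convention $\psi=0$ where $\phi=0$), the identity $(q-1)p=q$ yields $\|\psi\|_p=1$ and $\langle\psi,\phi\rangle=\|\phi\|_q$. The crucial point, which I expect to be the main obstacle, is to check that this extremiser again lies in $L^p_\xi(G,H)$; this is exactly where covariance enters. Since $|\xi(h)|=1$ we have $|\phi(xh)|=|\phi(x)|$ and $\phi(xh)=\xi(h)\phi(x)$, whence $\psi(xh)=\xi(h)\psi(x)$, i.e. $R_h\psi=\xi(h)\psi$, so $\psi\in L^p_\xi(G,H)$. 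Combined with the H\"older bound this forces $\|\Lambda_\phi\|=\|\phi\|_q$, and the degenerate case $\phi=0$ is trivial.

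For surjectivity I would begin with an arbitrary $\Lambda\in L^p_\xi(G,H)^*$, extend it by Hahn--Banach to $\widetilde{\Lambda}\in L^p(G)^*$, and use $L^p(G)^*\cong L^q(G)$ to write $\widetilde{\Lambda}=\Lambda_g$ for some $g\in L^q(G)$. For $\psi\in L^p_\xi(G,H)$ we have $T_\xi(\psi)=\psi$, because $T_\xi$ is the projection of $L^p(G)$ onto $L^p_\xi(G,H)$; hence, applying the extension of the adjoint relation (\ref{Jxi.Jxi*}) to the pair $(L^p,L^q)$,
\[
\Lambda(\psi)=\langle\psi,g\rangle=\langle T_\xi(\psi),g\rangle=\langle\psi,T_\xi(g)\rangle .
\]
Setting $\phi:=T_\xi(g)\in L^q_\xi(G,H)$ gives $\Lambda=\Phi(\phi)$, so $\Phi$ is onto and the identification is complete.

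As an alternative, purely functional-analytic route one could bypass the extremiser altogether. By Theorem \ref{Lp.main} the space $L^p_\xi(G,H)$ is isometrically $L^p(G)/\mathcal{N}_\xi^p(G,H)$, so its dual is isometrically the annihilator $\mathcal{N}_\xi^p(G,H)^\perp\subseteq L^p(G)^*\cong L^q(G)$. Since the extended $T_\xi$ is a bounded projection with closed range, $\operatorname{ran}(T_\xi^*)=(\ker T_\xi|_{L^p})^\perp=\mathcal{N}_\xi^p(G,H)^\perp$; and because the adjoint of $T_\xi$ is identified with $T_\xi$ on $L^q(G)$, this range is precisely $L^q_\xi(G,H)$, reproducing the same isometric identification. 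This version still leans on the earlier Proposition identifying $T_\xi^*$ with $T_\xi$, but the direct construction above is more self-contained and makes the role of covariance transparent.
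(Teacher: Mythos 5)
Your argument is correct, but it takes a genuinely different route from the paper. The paper first invokes Theorem \ref{Lp.main} to identify $L^p_\xi(G,H)$ isometrically with the quotient $L^p(G)/\mathcal{N}^p_\xi(G,H)$, then uses the standard duality $(X/N)^*\cong N^\perp$ and computes the annihilator $\mathcal{N}^p_\xi(G,H)^\perp$ inside $L^q(G)$ by two explicit inclusions: one direction uses the elements $R_{h^{-1}}f-\overline{\xi(h)}f$ of the kernel to show that any annihilating $g$ satisfies $R_hg=\xi(h)g$, and the converse direction uses Weil's formula on $G/H$ to show that covariant $g$ annihilate the kernel. You instead work directly with the restricted pairing: the H\"older bound gives the contraction, and the key new ingredient is your observation that the standard $L^q$--$L^p$ norming element $\|\phi\|_q^{-q/p}|\phi|^{q-2}\phi$ is itself covariant because $|\xi(h)|=1$, so the supremum defining $\|\Lambda_\phi\|$ is already attained inside $L^p_\xi(G,H)$; surjectivity then follows from Hahn--Banach together with $T_\xi\psi=\psi$ on $L^p_\xi(G,H)$ and the adjoint identity $\langle\psi,g\rangle=\langle\psi,T_\xi(g)\rangle$, which the paper has established in the proposition preceding the theorem. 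Your approach buys self-containedness (no Weil's formula, no quotient-space detour, and the role of unimodularity of the character is made transparent), at the cost of leaning on the identification of $T_\xi^*$ with $T_\xi$; the paper's approach buys uniformity, since the same annihilator computation is reused verbatim for the $p=1$, $q=\infty$ case in the following theorem, where your extremiser construction would need modification. Your closing alternative sketch, via $\operatorname{ran}(T_\xi^*)=(\ker T_\xi|_{L^p})^\perp$, is essentially an abstract repackaging of the paper's annihilator computation and is also sound.
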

\begin{proof}
Let $g\mapsto \Lambda_g$ be the canonical isometric isomorphism identification of  $L^q(G)$ as $L^p(G)^*$, where $\Lambda_g:L^p(G)\to\mathbb{C}$ is given by $\Lambda_g(f):=\langle f,g\rangle$ for every $f\in L^p(G)$.
Invoking Theorem \ref{Lp.main}, we conclude that the dual space 
$L^p_\xi(G,H)^*$ is isometrically isomorphic to $\mathcal{N}_\xi^p(G,H)^\perp$, where 
\[
\mathcal{N}_\xi^p(G,H)^\perp=\left\{g\in L^q(G):\Lambda_g(f)=0,\ \ {\rm for\ all}\ f\in \mathcal{N}_\xi^p(G,H)\right\}.
\]
We then claim that $\mathcal{N}_\xi^p(G,H)^\perp=L^q_\xi(G,H)$. To show this, let $g\in\mathcal{N}_\xi^p(G,H)^\perp$ be given. Then, $g\in L^q(G)$ and 
$\Lambda_g(f)=0$ for all $f\in \mathcal{N}_\xi^p(G,H)$. Suppose that $h\in H$ is arbitrary. Then, using (\ref{Rf.xif}) and Theorem \ref{Lp.main.Np}, for every $f\in\mathcal{C}_c(G)$ we have $R_{h^{-1}}f-\overline{\xi(h)}f\in\mathcal{N}_\xi^p(G,H)$. This implies that $\Lambda_g(R_{h^{-1}}f)=\overline{\xi(h)}\Lambda_g(f)$. Therefore, using compactness of $H$, we get
\begin{align*}
\Lambda_{R_hg}(f)&=\int_Gf(x)\overline{g(xh)}\dd\lambda_G(x)
\\&=\int_Gf(xh^{-1})\overline{g(x)}\dd\lambda_G(xh^{-1})
\\&=\Delta_G(h^{-1})\int_Gf(xh^{-1})\overline{g(x)}\dd\lambda_G(x)
\\&=\Delta_H(h^{-1})\int_Gf(xh^{-1})\overline{g(x)}\dd\lambda_G(x)
\\&=\int_Gf(xh^{-1})\overline{g(x)}\dd\lambda_G(x)
\\&=\Lambda_g(R_{h^{-1}}f)=\overline{\xi(h)}\Lambda_g(f)=\Lambda_{\xi(h)g}(f).
\end{align*} 
Since $f\in\mathcal{C}_c(G)$ was arbitrary, we get $R_hg=\xi(h)g$ in $L^q(G)$. Since $h\in H$ was arbitrary, we conclude that $g\in L^q_\xi(G,H)$. Conversely, suppose that $g\in L^q_\xi(G,H)$. We shall show that $\Lambda_g(f)=0$ for every  
$f\in \mathcal{N}_\xi^p(G,H)$. According to Theorem 2.49 of \cite{FollH}, let $\lambda_{G/H}$ be the $G$-invariant Radon measure on the left coset space $G/H$ normalized with respect to the Weil's formula (2.50) of \cite{FollH}.
Then, for every  $f\in \mathcal{N}_\xi^p(G,H)$, we have 
\begin{align*}
\Lambda_g(f)&=\int_G f(x)\overline{g(x)}\dd\lambda_G(x)
\\&=\int_{G/H}\left(\int_Hf(xh)\overline{g(xh)}\dd\lambda_H(h)\right)\dd\lambda_{G/H}(xH)
\\&=\int_{G/H}\left(\int_Hf(xh)\overline{\xi(h)}\dd\lambda_H(h)\right)\overline{g(x})\dd\lambda_{G/H}(xH)
=\int_{G/H}T_\xi(f)(x)\overline{g(x})\dd\lambda_{G/H}(xH)=0,
\end{align*}
which implies that $g\in \mathcal{N}_\xi^p(G,H)^\perp$.
\end{proof}

Suppose that $L^\infty(G)$ is the Banach space of all locally $\lambda_G$-measurable functions $f:G\to\mathbb{C}$ that are bounded except on a locally $\lambda_G$-null set, modulo functions which are zero locally a.e. on $G$ given the norm
\[
\|f\|_\infty:=\inf\{t: |f(x)|\le t\ \ {\rm l.a.e.}\ \ x\in G\}.
\]
We then have $M_\xi(G,H)\subset L^\infty(G)$. It is also routine to check that the linear operator 
$T_\xi:(\mathcal{C}_c(G),\|\cdot\|_{L^\infty(G)})\to(M_\xi(G,H),\|\cdot\|_{L^\infty(G)})$, is bounded with the operator norm $\|T_\xi\|\le\lambda_H(H)$. In particular, if $\lambda_H$ is the probability Haar measure 
of $H$ then the linear operator  
$T_\xi:(\mathcal{C}_c(G),\|\cdot\|_{L^\infty(G)})\to(M_\xi(G,H),\|\cdot\|_{L^\infty(G)})$ 
is a contraction. 

Let $L^\infty_\xi(G,H)$ be  closed subspace of $L^\infty(G)$ given by  
\[
L^\infty_\xi(G,H):=\left\{\psi\in L^\infty(G):R_h\psi=\xi(h)\psi,\ {\rm for}\ h\in H\right\}.
\]

We then also obtain the following characterization for the dual space $L^1_\xi(G,H)^*$.

\begin{theorem}
{\it Let $G$ be a locally compact group and $H$ be a compact subgroup of $G$. Suppose $\xi\in\chi(H)$. Then $L^1_\xi(G,H)^*$ is isometrically isomorphic to $L^\infty_\xi(G,H)$. 
}\end{theorem}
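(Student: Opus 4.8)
The plan is to follow the blueprint of Theorem \ref{Lpdual} verbatim, replacing the pair $(p,q)$ with $(1,\infty)$. First I would invoke Theorem \ref{Lp.main} to identify $L^1_\xi(G,H)$ isometrically with the quotient Banach space $L^1(G)/\mathcal{N}_\xi^1(G,H)$. Dualising a quotient by a closed subspace yields the standard isometric identification
\[
L^1_\xi(G,H)^*\cong\left(L^1(G)/\mathcal{N}_\xi^1(G,H)\right)^*\cong\mathcal{N}_\xi^1(G,H)^\perp,
\]
where the annihilator is taken inside $L^1(G)^*$, and where closedness of $\mathcal{N}_\xi^1(G,H)$ is guaranteed because it is the kernel of a bounded operator. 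The point specific to $p=1$ is that $L^1(G)^*$ is realised by $L^\infty(G)$ through the pairing $g\mapsto\Lambda_g$, $\Lambda_g(f):=\langle f,g\rangle$; this is precisely why $L^\infty(G)$ was defined via local measurability and local null sets, since Haar measure need not be $\sigma$-finite. With this realisation fixed, it remains to prove the equality $\mathcal{N}_\xi^1(G,H)^\perp=L^\infty_\xi(G,H)$ as subspaces of $L^\infty(G)$.

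For the inclusion $\mathcal{N}_\xi^1(G,H)^\perp\subseteq L^\infty_\xi(G,H)$, I would take $g$ in the annihilator and fix $h\in H$. By (\ref{Rf.xif}) together with Proposition \ref{Lp.main.Np}, every $f\in\mathcal{C}_c(G)$ satisfies $R_{h^{-1}}f-\overline{\xi(h)}f\in\mathcal{N}_\xi^1(G,H)$, so $\Lambda_g(R_{h^{-1}}f)=\overline{\xi(h)}\Lambda_g(f)$. A change of variables in the integral defining $\Lambda_{R_hg}(f)$, using $\Delta_G|_H=\Delta_H=1$ from compactness of $H$, then yields $\Lambda_{R_hg}(f)=\overline{\xi(h)}\Lambda_g(f)=\Lambda_{\xi(h)g}(f)$ for all $f\in\mathcal{C}_c(G)$; density of $\mathcal{C}_c(G)$ in $L^1(G)$ gives $R_hg=\xi(h)g$ locally almost everywhere, and letting $h$ range over $H$ places $g$ in $L^\infty_\xi(G,H)$. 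This transcribes the argument of Theorem \ref{Lpdual}, the only adjustment being that the identities are now read locally almost everywhere.

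For the reverse inclusion, I would take $g\in L^\infty_\xi(G,H)$ and show $\Lambda_g$ annihilates $\mathcal{N}_\xi^1(G,H)$. Choosing the $G$-invariant measure $\lambda_{G/H}$ normalised by Weil's formula, for $f\in\mathcal{N}_\xi^1(G,H)$ one unfolds
\[
\Lambda_g(f)=\int_{G/H}\left(\int_Hf(xh)\overline{g(xh)}\dd\lambda_H(h)\right)\dd\lambda_{G/H}(xH),
\]
and substitutes $\overline{g(xh)}=\overline{\xi(h)}\,\overline{g(x)}$ (valid since $g\in L^\infty_\xi(G,H)$) to recover the inner integral as $T_\xi(f)(x)\overline{g(x)}$; as $T_\xi(f)=0$ for $f\in\mathcal{N}_\xi^1(G,H)$, the whole expression vanishes, so $g\in\mathcal{N}_\xi^1(G,H)^\perp$. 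Combining the two inclusions gives $\mathcal{N}_\xi^1(G,H)^\perp=L^\infty_\xi(G,H)$, and feeding this back into the quotient duality completes the isometric isomorphism.

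The main obstacle is not in the algebraic manipulations, which descend directly from Theorem \ref{Lpdual}, but in the $L^1$–$L^\infty$ duality for a possibly non-$\sigma$-finite Haar measure: every pointwise identity must be upgraded to hold locally almost everywhere, and one must confirm that the annihilator of the closed subspace $\mathcal{N}_\xi^1(G,H)$ is genuinely represented inside the locally defined $L^\infty(G)$ rather than a smaller space. Once the correct functional-analytic framework is in place, the covariance computations proceed exactly as before.
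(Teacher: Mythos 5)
Your proposal is correct and takes essentially the same approach as the paper: the paper's own proof likewise combines Theorem \ref{Lp.main} with the Hewitt--Ross identification of $L^1(G)^*$ with $L^\infty(G)$ and then asserts that $\mathcal{N}_\xi^1(G,H)^\perp=L^\infty_\xi(G,H)$ follows ``by a similar method'' to Theorem \ref{Lpdual}. You have simply written that method out in full, including the appropriate caveat that the resulting identities hold locally almost everywhere.
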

\begin{proof}
Let $g\mapsto \Lambda_g$ be the canonical isometric isomorphism identification of $L^\infty(G)$ as $L^1(G)^*$, where $\Lambda_g:L^1(G)\to\mathbb{C}$ is given by $\Lambda_g(f):=\langle f,g\rangle$ for $f\in L^1(G)$, see Theorem 12.18 of \cite{HR1}.
Invoking Theorem \ref{Lp.main}, the dual space 
$L^1_\xi(G,H)^*$ is isometric isomorphic to $\mathcal{N}_\xi^1(G,H)^\perp$, where 
\[
\mathcal{N}_\xi^1(G,H)^\perp=\left\{g\in L^\infty(G):\Lambda_g(f)=0,\ \ {\rm for\ all}\ f\in \mathcal{N}_\xi^1(G,H)\right\}.
\]
Then using a similar method used in Theorem \ref{Lpdual}, we get $\mathcal{N}_\xi^1(G,H)^\perp=L^\infty_\xi(G,H)$. 
\end{proof}

We then conclude the paper by the following inclusion property when $G$ is compact. 

\begin{proposition}
{\it Let $G$ be a compact group and $H$ be a closed subgroup of $G$. 
Suppose $\xi\in\chi(H)$ and $1\le p<\infty$. Then, $L^p_\xi(G,H)\subseteq L_\xi^1(G,H)$.
}\end{proposition}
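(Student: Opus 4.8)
The plan is to reduce the statement to the elementary inclusion $L^p(G)\subseteq L^1(G)$ valid on a finite measure space, combined with the covariance characterization of $L^p_\xi(G,H)$ recorded earlier in the excerpt. Since $G$ is compact and $H$ is closed in $G$, the subgroup $H$ is automatically compact, so the preceding machinery applies and I may use the identification
\[
L^p_\xi(G,H)=\{\psi\in L^p(G):R_h\psi=\xi(h)\psi,\ \text{for}\ h\in H\},
\]
together with the analogous description of $L^1_\xi(G,H)$ as a covariant subspace of $L^1(G)$.

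First I would record that compactness of $G$ forces $\lambda_G(G)<\infty$. The case $p=1$ is trivial, so I would assume $1<p<\infty$ and let $q$ denote its conjugate exponent. Applying H\"older's inequality to the product $|\psi|\cdot 1$ then yields, for every $\psi\in L^p(G)$,
\[
\|\psi\|_{L^1(G)}\le\lambda_G(G)^{1/q}\|\psi\|_{L^p(G)},
\]
which shows that $L^p(G)\subseteq L^1(G)$ with a continuous inclusion.

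Next I would take an arbitrary $\psi\in L^p_\xi(G,H)$. By the characterization above, $\psi\in L^p(G)$ and $R_h\psi=\xi(h)\psi$ for every $h\in H$. The inclusion from the previous step gives $\psi\in L^1(G)$, and since the covariance identities hold as almost-everywhere equalities they persist when $\psi$ is regarded as an element of $L^1(G)$. Hence $R_h\psi=\xi(h)\psi$ holds in $L^1(G)$ for all $h\in H$, so $\psi\in L^1_\xi(G,H)$, establishing $L^p_\xi(G,H)\subseteq L^1_\xi(G,H)$.

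I do not expect a serious obstacle here: the only point requiring care is to use the two descriptions of $L^p_\xi(G,H)$ and $L^1_\xi(G,H)$ as covariant subspaces of $L^p(G)$ and $L^1(G)$ consistently, and to observe that the covariance relation transfers from $L^p$-equality to $L^1$-equality, which is immediate since both are almost-everywhere statements on $G$.
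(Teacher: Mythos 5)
Your proof is correct and rests on essentially the same idea as the paper's: both reduce the claim to the H\"older estimate $\|\psi\|_{L^1(G)}\le\lambda_G(G)^{(p-1)/p}\|\psi\|_{L^p(G)}$, available because the compact group $G$ has finite Haar measure. The only cosmetic difference is in the last step: the paper states this inequality on $M_\xi(G,H)$ and passes to the completions defining $L^p_\xi(G,H)$ and $L^1_\xi(G,H)$, while you instead invoke the characterization of $L^p_\xi(G,H)$ as the covariant subspace $\{\psi\in L^p(G):R_h\psi=\xi(h)\psi\}$ and observe that the almost-everywhere covariance relation transfers from $L^p$ to $L^1$; both routes are immediate.
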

\begin{proof}
Since $G$ is compact, we have $\|\psi\|_{L^1(G)}\le\lambda_G(G)^{\frac{p-1}{p}}\|\psi\|_{L^p(G)}$, for $\psi\in\mathcal{C}_c(G)$. Therefore, $\|\psi\|_{L^1(G)}\le\lambda_G(G)^{\frac{p-1}{p}}\|\psi\|_{L^p(G)}$ for $\psi\in M_\xi(G,H)$. This implies that $L^p_\xi(G,H)\subseteq L^1_\xi(G,H)$.
\end{proof}

\begin{corollary}
{\it Let $G$ be a compact group and $H$ be a closed subgroup of $G$. 
Suppose $\xi\in\chi(H)$ and $1\le p<\infty$. Then, 
\begin{enumerate}
\item $T_\xi:L^p(G)\to L^p(G)$ is a $L^p(G)$-multiplier.
\item $L^p_\xi(G,H)$ is a closed left ideal in $L^p(G)$. 
\end{enumerate}
}\end{corollary}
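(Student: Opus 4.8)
The plan is to derive both assertions from the multiplier identity $T_\xi(f\ast_G g)=f\ast_G T_\xi(g)$ already established, together with the single feature that distinguishes the compact case: the Haar measure $\lambda_G$ is finite. First I would record that, since $G$ is compact, H\"older's inequality (exactly as in the preceding proposition, via $\|\cdot\|_{L^1(G)}\le\lambda_G(G)^{(p-1)/p}\|\cdot\|_{L^p(G)}$) gives the continuous inclusion $L^p(G)\subseteq L^1(G)$. Consequently the convolution of two elements of $L^p(G)$ is defined and again lies in $L^p(G)$, so that $(L^p(G),\ast_G)$ is a Banach convolution algebra; this is what makes the phrases ``$L^p(G)$-multiplier'' and ``left ideal in $L^p(G)$'' meaningful.

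For part (1) I would invoke the proposition showing that the extended operator $T_\xi$ is an $L^1(G)$-multiplier, i.e. $T_\xi(f\ast_G g)=f\ast_G T_\xi(g)$ whenever $f\in L^1(G)$ and $g\in L^p(G)$. Since $L^p(G)\subseteq L^1(G)$ in the compact case, restricting $f$ to $L^p(G)$ yields precisely the $L^p(G)$-multiplier identity, while boundedness of $T_\xi$ on $L^p(G)$ is Corollary \ref{Js.p.Gcompact} (the compact specialisation of Theorem \ref{Js.p.Ncompact}). This disposes of (1) with essentially no new computation.

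For part (2) I would treat closedness and the ideal property separately. Closedness follows because the extended $T_\xi:L^p(G)\to L^p(G)$ is a bounded idempotent (extending $T_\xi\circ T_\xi=T_\xi$ from $\mathcal{C}_c(G)$) whose range is $L^p_\xi(G,H)$; the range of a bounded projection coincides with $\ker(I-T_\xi)$ and is therefore closed. For the ideal property, given $f\in L^p(G)$ and $\psi\in L^p_\xi(G,H)$, I would use that $T_\xi$ is the projection onto $L^p_\xi(G,H)$, so $T_\xi(\psi)=\psi$, and then apply the multiplier identity from (1):
\[
f\ast_G\psi=f\ast_G T_\xi(\psi)=T_\xi(f\ast_G\psi),
\]
which exhibits $f\ast_G\psi$ as an element of the range of $T_\xi$, namely $L^p_\xi(G,H)$. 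Alternatively, one may argue directly from the characterisation $L^p_\xi(G,H)=\{\psi\in L^p(G):R_h\psi=\xi(h)\psi,\ h\in H\}$: a one-line translation computation gives $(f\ast_G\psi)(xh)=\int_G f(y)(R_h\psi)(y^{-1}x)\,\dd\lambda_G(y)=\xi(h)(f\ast_G\psi)(x)$, so $R_h(f\ast_G\psi)=\xi(h)(f\ast_G\psi)$ and hence $f\ast_G\psi\in L^p_\xi(G,H)$.

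The only point genuinely requiring care is the opening one: confirming that $L^p(G)$ is a bona fide convolution algebra for compact $G$, so that convolution stays inside $L^p(G)$. This is the classical consequence of finiteness of Haar measure, and once it is in place everything else is a formal consequence of the multiplier and projection properties proved earlier; I expect no substantive obstacle beyond that.
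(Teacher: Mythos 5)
Your proposal is correct and follows essentially the route the paper intends: the corollary is stated without proof, as an immediate consequence of the preceding proposition (finiteness of $\lambda_G$ giving $L^p(G)\subseteq L^1(G)$, so that $L^p(G)$ is a convolution algebra) combined with the already-established $L^1(G)$-multiplier identity $T_\xi(f\ast_G g)=f\ast_G T_\xi(g)$ and the fact that $T_\xi$ is the projection onto $L^p_\xi(G,H)$. Your two derivations of the ideal property (via $f\ast_G\psi=f\ast_G T_\xi(\psi)=T_\xi(f\ast_G\psi)$, or directly from $R_h(f\ast_G\psi)=\xi(h)(f\ast_G\psi)$) are both sound and consistent with the paper's framework.
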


\

{\bf Acknowledgement.}
This project has received funding from the European Union’s Horizon 2020 research and innovation programme under the Marie Sklodowska-Curie grant agreement No. 794305. The author gratefully acknowledges the supporting agency. The findings and opinions expressed here are only those of the author, and not of the funding agency.\\
The author would like to express his deepest gratitude to Vladimir V. Kisil for suggesting the problem that motivated the results in this article, 
stimulating discussions and pointing out various references. 

\bibliographystyle{amsplain}

\begin{thebibliography}{10}

\bibitem{RB} R. Berndt, \textit{Representations of Linear Groups}. An Introduction Based on Examples from Physics and Number Theory. Springer-Vieweg, Wiesbaden, 2007.

\bibitem{der.main} A. Derighetti, \textit{Convolution Operators on Groups}, Lecture Notes of the Unione Matematica Italiana, 11. Springer, Heidelberg; UMI, Bologna, 2011. xii+171 pp.

\bibitem{der1983} A. Derighetti, \textit{On the multipliers of a quotient group}, Bull. Sci. Math. (2) 107 (1983), no. 1, 3-23.

\bibitem{FollH} G.B. Folland, \textit{A Course in Abstract Harmonic Analysis}, Studies in Advanced Mathematics, CRC Press, Boca Raton, FL 1995.

\bibitem{FollP} G.B. Folland, \textit{Harmonic Analysis in Phase Space.} Annals of Mathematics Studies, 122. Princeton University Press, Princeton, NJ, 1989. x+277 pp.

\bibitem{For.Sam.Sp} B.E. Forrest, E. Samei and N. Spronk, \textit{Convolutions on compact groups and Fourier algebras of coset spaces}, Studia Math. 196 (2010), no. 3, 223–249. 
%
\bibitem{For.RMJ} B.E. Forrest, \textit{Fourier analysis on coset space}, Rocky Mountain J. Math. 28 (1998), no. 1.
%
\bibitem{AGHF.IntJM} A. Ghaani Farashahi, \textit{Abstract measure algebras over homogeneous spaces of compact groups}, International Journal of Mathematics, 29 (2018), no. 1, 1850005, 34 pp.

\bibitem{AGHF.CJM} A. Ghaani Farashahi, \textit{A class of abstract linear representations for convolution function algebras over homogeneous spaces of compact groups}, Canadian Journal of Mathematics, 70 (2018), no. 1, 97-116.

\bibitem{AGHF.IJM} A. Ghaani Farashahi, \textit{Abstract convolution function algebras over homogeneous spaces of compact groups}, Illinois Journal of Mathematics, 59 (2015), no. 4, 1025-1042.  

\bibitem{AGHF.BMMSS} A. Ghaani Farashahi, \textit{Convolution and involution on function spaces of homogeneous spaces}, 
Bull. Malays. Math. Sci. Soc. (2) 36 (2013) no. 4, 1109-1122.

\bibitem{HR1} E. Hewitt and K.A. Ross, 
\textit{Abstract Harmonic Analysis. Vol. I: Structure of topological groups. Integration theory, group representations}, Die Grundlehren der mathematischen Wissenschaften, Bd. 115, Academic Press, Inc., Publishers, New York; Springer-Verlag, Berlin-G\"ottingen-Heidelberg, 1963,

\bibitem{kan.lau} E. Kaniuth, A.T.-M Lau, \textit{Fourier and Fourier-Stieltjes algebras on locally compact groups}, Mathematical Surveys and Monographs, 231. American Mathematical Society, Providence, RI, 2018.

\bibitem{kan.taylor} E. Kaniuth, K.F. Taylor, \textit{Induced representations of locally compact groups}, Cambridge Tracts in Mathematics, 197. Cambridge University Press, Cambridge, 2013. xiv+343 pp. 

\bibitem{kisil.BJMA} V. Kisil, \textit{Calculus of operators: covariant transform and relative convolutions}, Banach J. Math. Anal. 8(2014)2, 156-184.

\bibitem{kisil.book} V. Kisil, \textit{Geometry of M\"obius Transformations. Elliptic, Parabolic and Hyperbolic Actions of $SL_2(\mathbb{R})$}, 
Imperial College Press, London, 2012.

\bibitem{kisil.A} V. Kisil, \textit{Operator covariant transform and local principle}, J. Phys. A 45 (2012), no. 24, 244022, 10 pp.

\bibitem{kisil1} V. Kisil, \textit{Relative convolutions. I. Properties and applications}, Adv. Math. 147 (1999), no. 1, 35-73.

\bibitem{MackII}  G.W. Mackey, \textit{Induced representations of locally compact groups. II. The Frobenius reciprocity theorem.}, Ann. of Math. (2) 58 (1953), 193-221.

\bibitem{MackI} G.W. Mackey, \textit{Induced representations of locally compact groups. I.}, Ann. of Math. (2) 55 (1952), 101-139.

\bibitem{Par.Kum} K. Parthasarathy and N. Shravan Kumar, \textit{Fourier algebras on homogeneous spaces}, Bull. Sci. Math. 135 (2011), no. 2, 187–205. 

\bibitem{50} H. Reiter, J.D. Stegeman,  \textit{Classical Harmonic Analysis}, 2nd Ed, Oxford University Press, New York, 2000.

\end{thebibliography}

\end{document}